\newtheorem{theorem}{Theorem}[section]
\newtheorem{proposition}[theorem]{Proposition}
\newtheorem{lemma}[theorem]{Lemma}
\newtheorem{corollary}[theorem]{Corollary}
\newtheorem{conjecture}[theorem]{Conjecture}
\theoremstyle{definition}
\newtheorem{definition}[theorem]{Definition}
\theoremstyle{remark}
\newtheorem{example}[theorem]{Example}
\renewcommand{\Im}{\operatorname{Im}}
\newcommand{\id}{\operatorname{id}}
\newcommand{\Aut}{\operatorname{Aut}}
\newcommand{\coker}{\operatorname{coker}}
\newcommand{\rank}{\operatorname{rank}}
\newcommand{\GL}{\operatorname{GL}}
\newcommand{\ord}{\operatorname{ord}}
\newcommand{\Z}{{\mathbb Z}}
\newcommand{\E}{{\mathcal E}}
\newcommand{\B}{{\mathcal B}}
\newcommand{\xrightarrowdbl}[2][]{%
  \xrightarrow[#1]{#2}\mathrel{\mkern-14mu}\rightarrow
}
   \def\MR#1{}
\begin{document}

\title{The group of self-homotopy equivalences of $A_n^2$-polyhedra}
\author{Cristina Costoya}
\address{CITIC, Departamento de Computaci\'on,
Universidade da Coru{\~n}a, 15071-A Coru{\~n}a, Spain.}
\email{cristina.costoya@udc.es}
\thanks{The first author was partially supported by Mi\-nis\-te\-rio de Econom\'ia y Competitividad (Spain), grant  MTM2016-79661-P}
\author{David M\'endez}
\address{Departamento de \'Algebra, Geometr{\'\i}a y Topolog{\'\i}a, Universidad de M{\'a}laga, 29071-M{\'a}laga, Spain}
\email{david.mendez@uma.es}
\thanks{The second author was partially supported by Ministerio de Educaci\'on, Cultura y Deporte grant FPU14/05137, and by Ministerio de Econom\'ia y Competitividad (Spain) grants MTM2016-79661-P and MTM2016-78647-P}
\author{Antonio Viruel}
\address{Departamento de \'Algebra, Geometr{\'\i}a y Topolog{\'\i}a, Universidad de M{\'a}laga, 29071-M{\'a}laga, Spain}
\email{viruel@uma.es}
\thanks{The third author was partially supported by by Ministerio de Econom\'ia y Competitividad (Spain) grant MTM2016-78647-P}

\subjclass{20K30, 55P10, 55P15}
\keywords{$A_n^2$-polyhedra, self-homotopy equivalences}

\begin{abstract}
Let $X$ be a finite type $A_n^2$-polyhedron, $n \geq 2$. In this paper we study the quotient group $\E(X)/\E_*(X)$, where $\E(X)$ is the group of self-homotopy equivalences of $X$ and $\E_*(X)$ the subgroup  of self-homotopy equivalences inducing the identity on the homology groups of $X$. We show that not every group can be realised as $\E(X)$ or $\E(X)/\E_*(X)$ for $X$ an $A_n^2$-polyhedron, $n\ge 3$, and specific results are obtained for $n=2$.
\end{abstract}

\maketitle


\section{Introduction}\label{sec:intro}

Let $\E(X)$ denote the group of homotopy classes of self-homotopy equivalences
of  a space $X$ and $\E_*(X)$ denote the normal subgroup of self-homotopy equivalences inducing the identity on the homology groups of $X$. Problems related to $\E(X)$ have been extensively studied, deserving a special mention Kahn's realisability problem, which has been placed first to solve in \cite{Ark01} (see also \cite{Ark90, Kah76, Kah90, Rut97}). It asks whether an arbitrary group  can be realised as $\mathcal E(X)$ for some  simply connected $X$, and though the general case remains an open question, it has recently been solved for finite groups, \cite{CosVir14}. As a way to approach Kahn's problem, in \cite[Problem 19]{Fel10} the question of whether an arbitrary group can appear as the distinguished quotient $\E(X)/\E_*(X)$ is raised.

In this paper we work with $(n-1)$-connected $(n+2)$-dimensional $CW$-complexes  for $n\ge 2$, the so-called $A_n^2$-polyhedra. Homotopy types of these spaces have been classified by Baues in \cite[Ch. I, \S8]{Bau91}  using the long exact sequence of groups associated to simply connected spaces introduced by J.\ H.\ C.\ Whitehead in \cite{Whi50}. In \cite{Ben16}, the author uses that classification to study the group of self-homotopy equivalences of an $A_2^2$-polyhedron $X$.  He associates to $X$ a group $\B^4(X)$ that is isomorphic to $\E(X)/\E_*(X)$ and asks if any group can be realised as such a quotient in this context,  that is, if $A_2^2$-polyhedra provide an adequate framework to solve the realisability problem.

Here, in the general setting of an $A_n^2$-polyhedra $X$, $n\geq 2$, we also construct a group $\B^{n+2}(X)$ (see Definition \ref{def:BnX}) that  is isomorphic to $\E(X)/\E_*(X)$ (see Proposition \ref{prop:selfequiviso}). We show that there exist many groups (for example $\mathbb Z/p$, $p$ odd, Corollary \ref{cor:notrealisability}) for which the question above  does not admit a positive answer. This fact should illustrate that $A_n^2$-polyhedra might  not be the right setting to answer  \cite[Problem 19]{Fel10}.

We show, for instance, that under some restrictions on the homology groups of $X$, $\B^{n+2}(X)$ is infinite, which in particular implies that $\E(X)$ is infinite (see Proposition \ref{prop:mainfinite} and Proposition \ref{prop:rankfreeHn}). Or for example, in many situations the existence of odd order elements in the homology groups of  $X$ implies the existence of involutions in $\B^{n+2}(X)$ (see Lemma \ref{lemma:Hnnot2group} and Lemma \ref{lemma:homologynot2group}).

In this paper we prove the following result:
\begin{theorem}\label{th:realisabilitygr3}
	Let $X$ be a finite type $A_n^2$-polyhedron, $n\ge 3$. Then $\B^{n+2}(X)$ is either the trivial group or it has elements of even order.	
\end{theorem}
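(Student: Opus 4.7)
The plan is to exploit the explicit algebraic description of $\B^{n+2}(X)$ together with a sign-change trick. By Proposition~\ref{prop:selfequiviso} we have $\B^{n+2}(X)\cong \E(X)/\E_*(X)$. For $n\ge 3$ the stable $1$-stem $\pi_{n+1}(S^n)\cong \Z/2$ simplifies Whitehead's certain exact sequence, and so (by Baues's classification, applied earlier in the paper) the homotopy type of $X$ is captured by a triple $(H_n(X),H_{n+1}(X),\beta)$ in which $\beta$ is a secondary invariant whose target is $H_n(X)\otimes \Z/2$. Under this identification, $\B^{n+2}(X)$ becomes the group of pairs $(\phi,\psi)\in\Aut H_n(X)\times \Aut H_{n+1}(X)$ compatible with $\beta$, meaning $(\phi\otimes\Z/2)\circ\beta=\beta\circ\psi$.

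The key observation is that, since the target of $\beta$ is a $\Z/2$-module, the pair $(-\id_{H_n(X)},\id_{H_{n+1}(X)})$ is automatically compatible with $\beta$ (its first factor becomes $\id$ modulo $2$), and so is $(\id_{H_n(X)},-\id_{H_{n+1}(X)})$ (since $-\beta=\beta$ in the target). Both pairs therefore lie in $\B^{n+2}(X)$, and they are involutions. The first is nontrivial exactly when $H_n(X)$ contains an element of order greater than $2$, and the second exactly when $H_{n+1}(X)$ does. So whenever $H_n(X)\oplus H_{n+1}(X)$ fails to be a $\Z/2$-vector space, we already have an involution in $\B^{n+2}(X)$ and the theorem is proved.

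It remains to treat the case where both $H_n(X)$ and $H_{n+1}(X)$ are $\Z/2$-vector spaces, so that $\beta\colon H_{n+1}(X)\to H_n(X)$ is a plain $\Z/2$-linear map. Fix splittings $H_{n+1}(X)=\ker\beta\oplus V$ and $H_n(X)=\Im\beta\oplus W$. Preservation of $\beta$ forces $\phi(\Im\beta)\subseteq\Im\beta$ and $\psi(\ker\beta)\subseteq\ker\beta$, and imposes the relation $\phi|_{\Im\beta}=\beta\circ\psi|_V\circ\beta^{-1}$. A direct computation puts $\B^{n+2}(X)$ in set bijection with $\GL(\ker\beta)\times\GL(V)\times\GL(W)\times \Hom(V,\ker\beta)\times \Hom(W,\Im\beta)$. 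Each $\GL_k(\Z/2)$ contains an involution once $k\ge 2$, and every nonzero element of a $\Hom$-factor gives a unipotent of exponent $2$, again an involution. A short case analysis shows the whole product can be trivial only in two degenerate configurations: either $\beta=0$ with $\dim H_n(X),\dim H_{n+1}(X)\le 1$, or $H_n(X)=H_{n+1}(X)=\Z/2$ with $\beta$ an isomorphism. In every other configuration at least one factor contributes an involution to $\B^{n+2}(X)$.

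The principal obstacle is the realisation step: one has to ensure that every algebraic automorphism of the triple $(H_n(X),H_{n+1}(X),\beta)$ is actually induced by a self-homotopy equivalence of $X$. This is precisely what Baues's classification of $A_n^2$-polyhedra provides for $n\ge 3$, combined with the explicit description of $\B^{n+2}(X)$ built up earlier in the paper. This is also the point at which the hypothesis $n\ge 3$ is essential: for $n=2$ the target of $\beta$ involves Whitehead's quadratic $\Gamma$-functor rather than a tensor with $\Z/2$, and both the sign-change trick and the terminal case analysis must be reconsidered.
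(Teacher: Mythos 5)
Your reduction to the case where $H_n(X)$ and $H_{n+1}(X)$ are elementary abelian $2$-groups via the sign-change involutions is sound (it is the paper's Lemma \ref{lemma:Hnnot2group} and part of Lemma \ref{lemma:homologynot2group}, using $\Gamma_n^1(-\id)=\id$), and your terminal block-matrix analysis of automorphisms of an extension of $\Z_2$-vector spaces is a legitimate, arguably cleaner alternative to the paper's route through $h_{n+1}$-split subgroups and explicit diagrams; your two degenerate configurations agree with the paper's trivial cases. However, there is a genuine gap: you have dropped $H_{n+2}(X)$ from the classification data. The invariant classifying $A_n^2$-polyhedra is not a triple $\big(H_n(X),H_{n+1}(X),\beta\big)$ but the whole $\Gamma$-sequence $H_{n+2}(X)\xrightarrow{b_{n+2}} H_n(X)\otimes\Z_2\to\pi_{n+1}(X)\to H_{n+1}(X)\to 0$, and elements of $\B^{n+2}(X)$ are \emph{triples} $(f_{n+2},f_{n+1},f_n)$ admitting a lift $\Omega$ on $\pi_{n+1}(X)$. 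Consequently your argument never treats the case $H_{n+2}(X)\ne 0$, and there your degenerate-case conclusion is actually false: for instance if $H_n(X)=\Z_2$, $H_{n+1}(X)=0$ and $H_{n+2}(X)=\Z$, your parametrisation declares $\B^{n+2}(X)$ trivial, whereas $(-\id_{H_{n+2}},\id,\id)$ is a nontrivial involution (it is compatible with $b_{n+2}$ because the target $H_n(X)\otimes\Z_2$ is $2$-torsion). The fix is one line --- whenever $H_{n+2}(X)\ne 0$ this triple is an involution, which is how the paper disposes of that case via Lemma \ref{lemma:homologynot2group} --- but as written the proof does not cover it, and the extension you analyse should in general be of $H_{n+1}(X)$ by $\coker b_{n+2}$ rather than by $H_n(X)\otimes\Z_2$.

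A second, smaller point: even in the case $H_{n+2}(X)=0$ with both groups elementary abelian, you identify $\B^{n+2}(X)$ with pairs $(\phi,\psi)$ satisfying $(\phi\otimes\Z_2)\beta=\beta\psi$ for a homomorphism $\beta$, whereas the actual condition is the existence of $\Omega\in\Aut\big(\pi_{n+1}(X)\big)$ restricting to $\phi\otimes\Z_2$ on the image of $H_n(X)\otimes\Z_2$ and inducing $\psi$. This is equivalent to your condition via the natural isomorphism $\operatorname{Ext}\big(H_{n+1}(X),H_n(X)\otimes\Z_2\big)\cong\Hom\big(H_{n+1}(X),H_n(X)\otimes\Z_2\big)$ for elementary abelian $2$-groups and the standard criterion for extending a pair of automorphisms over an extension, but that identification is doing real work and should be stated and justified rather than asserted.
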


As an immediate corollary, we obtain the following:
\begin{corollary} \label{cor:notrealisability} Let $G$ be a non-trivial group with no elements of even order. Then $G$ is not realisable as $\B^{n+2}(X)$ for $X$ a finite type $A_n^2$-polyhedron, $n\ge 3$.
	
	\end{corollary}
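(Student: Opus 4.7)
The plan is to derive the corollary as a direct logical consequence of Theorem \ref{th:realisabilitygr3}, proceeding by contradiction. First I would assume that the group $G$ is realised, i.e.\ that there exists a finite type $A_n^2$-polyhedron $X$ with $n\ge 3$ satisfying $\B^{n+2}(X)\cong G$, so that any structural property forced on $\B^{n+2}(X)$ by the theorem must be inherited by $G$.

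Next I would invoke Theorem \ref{th:realisabilitygr3} to split the situation into the two exhaustive cases it provides: either $\B^{n+2}(X)$ is trivial, or it contains at least one element of even order. I would rule out each case in turn using the two hypotheses on $G$. The first possibility is excluded by the assumption that $G$ is non-trivial, and the second is excluded by the assumption that $G$ has no elements of even order. Since both alternatives fail, no such polyhedron $X$ can exist, and $G$ is not realisable as $\B^{n+2}(X)$.

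There is no genuine obstacle internal to the corollary itself: the entire technical content has been absorbed into Theorem \ref{th:realisabilitygr3}, and what remains is a two-line deduction. As a sanity check and to illustrate the scope of the statement, I would note that the corollary rules out, in particular, all non-trivial groups of odd order, including $\Z/p$ for $p$ an odd prime (as anticipated in the introduction), as well as non-trivial torsion-free groups such as $\Z$, since elements of infinite order do not count as having even order.
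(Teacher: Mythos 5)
Your proof is correct and is exactly the deduction the paper intends: the corollary is stated as an immediate consequence of Theorem \ref{th:realisabilitygr3}, and your contradiction argument ruling out both branches of the dichotomy (triviality versus an element of even order) is the only content needed.
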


The case $n=2$ is more complicated.  A detailed group theoretical analysis shows that a finite type $A_2^2$-polyhedra might realise finite groups of odd order only  under very restrictive conditions. Recall that for a group $G$, $\rank G$ is the smallest cardinal of a set of generators for $G$ \cite[p.\ 91]{LS2001}. We have the following result:
\begin{theorem}\label{th:realisabilityn2}
	Suppose that $X$ is a finite type $A_2^2$-polyhedron with a non trivial finite  $ \B^4(X)$  of odd order. Then the following holds:
	\begin{enumerate}
		\item $\rank H_4(X) \le 1$,
		\item $\pi_3(X)$ and $H_3(X)$ are $2$-groups, and $H_2(X)$ is an elementary abelian $2$-group,
		\item $\rank H_3(X)  \le \frac{1}{2}\rank H_2(X)\big(\rank H_2(X) + 1\big) - \rank H_4(X)\le \rank\pi_3(X)$,
		\item the natural action of $\B^4(X)$ on $H_2(X)$ induces a faithful representation $\B^4(X) \le \Aut\big(H_2(X)\big)$.
	\end{enumerate}
\end{theorem}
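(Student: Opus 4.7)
The plan is to combine the lemmas already quoted in the introduction, which force involutions in $\mathcal B^4(X)$ whenever the homology is too rich, with a direct analysis of the Baues classification of $A_2^2$-polyhedra to extract the structural constraints and the faithfulness assertion. Throughout, the standing hypothesis that $|\mathcal B^4(X)|$ is odd eliminates every $2$-torsion obstruction appearing in the statements cited.

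First I would invoke Lemma \ref{lemma:Hnnot2group} and Lemma \ref{lemma:homologynot2group} under this hypothesis: each produces a non-trivial involution in $\mathcal B^4(X)$ from any odd-order contribution to $\pi_3(X)$ or $H_*(X)$, so all such groups must be $2$-groups. Next, Proposition \ref{prop:mainfinite} and Proposition \ref{prop:rankfreeHn} give rank restrictions on the free parts of the homology once $\mathcal B^4(X)$ is assumed finite; I would specialise those bounds to $n=2$ to extract $\rank H_4(X)\le 1$, which is item~(1), together with the upper estimate on $\rank H_3(X)$ in terms of $\rank H_4(X)$ appearing in~(3).

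For item~(4) I would study the natural homomorphism $\mathcal B^4(X)\to \Aut\bigl(H_2(X)\bigr)$ produced by Definition \ref{def:BnX}. A class in its kernel acts on the remaining invariants $H_3(X)$ and $\pi_3(X)$, both of which are $2$-groups by the previous step; exploiting the description of $\mathcal B^4(X)$ inside the Baues classification, the kernel should embed into automorphism groups of these $2$-groups, so its order is a power of~$2$. Since $|\mathcal B^4(X)|$ is odd, the kernel is forced to be trivial and the representation faithful. With~(4) established, the elementary abelian statement in~(2) follows formally: a non-trivial odd-order subgroup of $\Aut\bigl(H_2(X)\bigr)$ can exist only if $H_2(X)$ has no cyclic summand of order $2^k$ with $k\ge 2$ and no two cyclic summands of distinct orders, because otherwise $\Aut\bigl(H_2(X)\bigr)$ would itself be a $2$-group on the relevant factor. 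Hence $H_2(X)$ is elementary abelian.

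Finally, to obtain the remaining inequalities in~(3) I would use the Whitehead $\Gamma$-sequence that underlies Baues' classification. Since $H_2(X)$ is an elementary abelian $2$-group of rank $r=\rank H_2(X)$, the group $\Gamma\bigl(H_2(X)\bigr)$ has rank $\binom{r+1}{2}=\tfrac{1}{2}r(r+1)$, and $H_3(X)$ fits into an exact sequence involving this $\Gamma$-term, $\pi_3(X)$ and the free part of $H_4(X)$. Reading off ranks from this sequence yields the quadratic estimate $\rank H_3(X)\le \tfrac{1}{2}r(r+1)-\rank H_4(X)$ and its comparison with $\rank\pi_3(X)$.

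The main obstacle will be the faithfulness step~(4): it is the only place where the mere action of $\mathcal B^4(X)$ on homology is insufficient, and one must exploit the secondary invariants in Baues' classification to control a class acting trivially on $H_2(X)$. Once this $2$-group structure of the kernel is pinned down, every other assertion of the theorem reduces to a bookkeeping argument with the Whitehead $\Gamma$-sequence and the rank restrictions already proved in the cited propositions.
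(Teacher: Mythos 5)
Your outline breaks down at several load-bearing points. First, you invoke Lemma \ref{lemma:homologynot2group} to conclude that $H_3(X)$ and $\pi_3(X)$ are $2$-groups, but that lemma is stated only for $n\ge 3$, and the paper explicitly remarks after proving it that it fails for $A_2^2$-polyhedra: when $n=2$ the group $\Gamma\big(H_2(X)\big)$ is never an elementary abelian $2$-group, so $(-\id,-\id,-\id)$ need not be a $\Gamma$-isomorphism. Establishing item (2) is in fact the hard core of the theorem: the paper's route is that any summand of $H_3(X)$ of infinite or odd order would be $h_3$-split, and then Lemma \ref{lemma:splitsummands} (whose $n=2$ case rests on the combinatorial Lemma \ref{lemma:h3splitIfOddThenEven}) produces an involution in $\B^4(X)$. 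The same machinery is what gives the first inequality of item (3): merely ``reading off ranks'' from $0\to\coker b_4\to\pi_3(X)\to H_3(X)\to 0$ yields $\rank H_3(X)\le\rank\pi_3(X)$, not the stronger bound $\rank H_3(X)\le\rank(\coker b_4)$, which needs the fact that no subgroup of $H_3(X)$ is $h_3$-split.

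Second, your faithfulness argument for item (4) rests on the claim that a group embedding into the automorphism group of a $2$-group must have $2$-power order; this is false, since $\Aut(\Z_2\oplus\Z_2)\cong \GL_2(\Z_2)$ has order $6$. The paper instead observes that an element of the kernel of $\B^4(X)\to\Aut\big(H_2(X)\big)$ induces, via $\Gamma(\id)=\id$, the identity on $i_2(\coker b_4)$, that $\Omega_1\big(\pi_3(X)\big)\le i_2(\coker b_4)$, and then applies Gorenstein's theorem that an odd-order automorphism of a finite $2$-group restricting to the identity on a subgroup containing $\Omega_1$ is the identity. Your subsequent derivation of the elementary abelian statement in (2) from (4) relies on equally incorrect group theory ($\Aut(\Z_4\oplus\Z_4)$ is not a $2$-group) and is in any case superfluous and wrongly ordered: Lemma \ref{lemma:Hnnot2group} gives directly that $H_2(X)$ is elementary abelian, and this must come first, since it is what makes $\Gamma\big(H_2(X)\big)$ finite so that Proposition \ref{prop:mainfinite} yields item (1) (note also that Proposition \ref{prop:rankfreeHn}, which you cite for the rank bounds, is again an $n\ge 3$ statement).
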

All our attempts to find a space satisfying the hypothesis of Theorem \ref{th:realisabilityn2} were unsuccessful. We therefore raise the following conjecture:
\begin{conjecture}\label{question:realisability}
Let $X$ be an $A_2^2$-polyhedron. If $\B^4(X)$ is a non trivial finite group, then it necessarily has an element of even order.
\end{conjecture}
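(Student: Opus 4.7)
Proof plan. Since the statement is only proposed as a conjecture, the authors could not close the argument; I describe how one would attempt to do so by contradiction. Suppose $G := \B^4(X)$ were a nontrivial finite group of odd order. By Theorem \ref{th:realisabilityn2}, $H_2(X) \cong (\Z/2)^k$ with $k \ge 2$, and $G$ embeds faithfully in $\GL(k, \mathbb F_2)$ via its action on $H_2(X)$. Because $\gcd(|G|, 2) = 1$, Maschke's theorem splits $H_2(X)$ as a direct sum of irreducible $\mathbb F_2[G]$-modules; for each prime $p \mid |G|$ the smallest irreducible has $\mathbb F_2$-dimension $\ord_p(2)$, so $k$ is forced to be reasonably large (e.g.\ $k\ge 2$ for $p=3$, $k\ge 3$ for $p=7$, $k\ge 4$ for $p=5$). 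In parallel, since $\pi_3(X)$ and $H_3(X)$ are $2$-groups of odd-coprime order to $|G|$, their $G$-module structures admit $G$-stable filtrations whose subquotients are $\mathbb F_2[G]$-modules, of total $\mathbb F_2$-dimension controlled by Theorem \ref{th:realisabilityn2}(3).

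The key next step is to exploit the full algebraic classification of $A_2^2$-polyhedra of Baues \cite[Ch. I, \S8]{Bau91}, which records more than the homology and homotopy groups: the Whitehead quadratic refinement $\Gamma(H_2(X)) \to \pi_3(X)$ (where $\Gamma$ is the universal quadratic functor, so $\Gamma(\mathbb F_2^k)$ carries both the generators $\gamma(e_i)$ of order $4$ and the cross-terms $[e_i,e_j]$), the $k$-invariant attaching $H_3(X)$ to $\pi_3(X)$, and the cup/Steenrod pairing $H_2(X) \otimes H_2(X) \to H_4(X;\mathbb F_2)$. Every self-homotopy equivalence preserves all of this data, so each induced $G$-action must be compatible with the action on $H_2(X)$ through these maps. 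The aim is to show that this joint compatibility forces the action on $H_2(X)$ to be trivial, contradicting the faithfulness guaranteed by Theorem \ref{th:realisabilityn2}(4).

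The hard part, and precisely what keeps the statement at the level of a conjecture, is making this rigidity argument sharp enough in full generality. A reasonable concrete starting point is the minimal case $G = \Z/3$ acting on $H_2(X) = \mathbb F_2^2$ by cyclically permuting the three nonzero vectors: one can compute the induced $G$-action on $\Gamma(\mathbb F_2^2)$ explicitly, check how it interacts with the map to $\pi_3(X)$ subject to the bound $\rank H_3(X) \le 3 - \rank H_4(X)$, and test whether any Baues algebraic model of an $A_2^2$-polyhedron actually supports such a symmetry. If no compatible model exists, a reduction to a minimal counterexample (taking $G$ to be of prime order and passing to an isotypic $\mathbb F_2[G]$-component of $H_2(X)$) should conclude the proof. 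If some model does survive the minimal test, the outcome is either an explicit counterexample to the conjecture or evidence that a new invariant of $A_2^2$-polyhedra, beyond those provided by Baues' classification, is required to settle it.
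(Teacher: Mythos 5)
This statement is stated in the paper as a conjecture: the authors give no proof, and explicitly say that all their attempts to find a space satisfying the hypotheses of Theorem \ref{th:realisabilityn2} (which would be a counterexample) were unsuccessful. So there is no proof in the paper to compare your proposal against, and your proposal, as you acknowledge, is a research plan rather than a proof. The plan itself is consistent with what the paper actually establishes: the reduction to $H_2(X)\cong(\Z/2)^k$ with $k\ge 2$ and a faithful odd-order subgroup of $\GL(k,\mathbb F_2)$ follows from Theorem \ref{th:realisabilityn2}, the Maschke/$\ord_p(2)$ dimension counts are correct, and the minimal test case $G=\Z/3$ acting irreducibly on $\mathbb F_2^2$ is indeed the natural first target.

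Two points deserve correction. First, your closing suggestion that ``a new invariant of $A_2^2$-polyhedra, beyond those provided by Baues' classification, is required'' cannot be right: by Definition \ref{def:BnX} and Theorem \ref{th:detectingfunctor}, $\B^4(X)$ \emph{is} the automorphism group of the $\Gamma$-sequence, and the $\Gamma$-sequence is a complete invariant of the homotopy type. The conjecture is therefore equivalent to a purely algebraic statement about exact sequences $H_4\to\Gamma(H_2)\to\pi_3\to H_3\to 0$, and no additional topological data (cup products, Steenrod operations) can enter; those are already encoded in $b_4$. Second, your plan does not engage with what is actually the hard point, visible in the proof of Lemma \ref{lemma:splitsummands}: Lemma \ref{lemma:h3splitIfOddThenEven} already produces, from an odd-order symmetry, an involution $g$ of $H_2(X)$ with $\Gamma(g)b_4=b_4$, but this only yields an element of order $2$ in $\B^4(X)$ if one can find a compatible $\Omega$ on $\pi_3(X)$ and $f_3$ on $H_3(X)$ --- which the paper can only do when some subgroup of $H_3(X)$ is $h_3$-split. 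The unresolved case is precisely when no $h_3$-split subgroup exists, i.e.\ when the extension $0\to\coker b_4\to\pi_3(X)\to H_3(X)\to 0$ is suitably non-split; any serious attack on the conjecture has to analyse how an involution of $\Gamma(H_2(X))$ fixing $\Im b_4$ can fail to descend through that extension.
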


This paper is organised as follows. In Section \ref{sec:classification} we give a brief introduction to Whitehead and Baues results for the classification of homotopy types of $A_n^2$-polyhedra, or equivalently, isomorphism classes of certain long exact sequences of abelian groups (see Theorem \ref{th:detectingfunctor}).
In Section \ref{sec:generalresults} we study how restrictions on $X$ affect the group $\B^{n+2}(X)$.
Finally, Section \ref{sec:obstructions} is devoted to the proof of our main results, Theorem \ref{th:realisabilitygr3} and Theorem \ref{th:realisabilityn2}.


\section{The $\Gamma$-sequence of an $A_n^2$-polyhedron}\label{sec:classification}

Let \text{Ab} denote the category of abelian groups. In \cite{Whi50}, J.H.C. Whitehead constructed a functor $\Gamma\colon \text{Ab} \to \text{Ab}$, known as the Whitehead's universal quadratic functor, and an exact sequence, which are useful to our purposes and we introduce in this section. The $\Gamma$-functor is defined as follows. Let $A$ and $B$ be abelian groups and $\eta\colon A\to B$ be a map (of sets) between them. The map $\eta$ is said to be quadratic if:
\begin{enumerate}
\item $\eta(a)=\eta(-a)$, for all $a\in A$, and
\item the map $A\times A\to B$ taking $(a, a')$ to $\eta(a+a')-\eta(a)-\eta(a')$ is bilinear.
\end{enumerate}

For an abelian group $A$, $\Gamma(A)$ is the only abelian group such that there exists a quadratic map $\gamma\colon A\to \Gamma(A)$ verifying that every other quadratic map $\eta \colon A\to B$ factors uniquely through $\gamma$. This means that there is a unique group homomorphism $\eta^\square\colon \Gamma(A)\to B$ such that $\eta=\eta^\square \gamma$. The quadratic map $\gamma\colon A\to \Gamma(A)$ receives the name of universal quadratic map of $A$.

The $\Gamma$-functor acts on morphisms as follows: let $f\colon A\to B$ be a group homomorphism, and $\gamma\colon A\to\Gamma(A)$ and $\gamma\colon B\to \Gamma(B)$ the universal quadratic maps. Then, $\gamma f\colon A\to \Gamma(B)$ is a quadratic map, so there exists a unique group homomorphism $(\gamma f)^\square\colon \Gamma(A)\to \Gamma(B)$ such that $(\gamma f)^\square\gamma = \gamma f$. Define $\Gamma(f)=(\gamma f)^\square$.

We now list some of its properties that will be used later in this paper:
\begin{proposition}{\rm(\cite[pp.\ 16--17]{Bau96})}\label{prop:propertiesgammafunctor}
The $\Gamma$ functor has the following properties:
\begin{enumerate}
\item $\Gamma(\Z)=\Z$,
\item $\Gamma(\Z_n)$ is $\Z_{2n}$ if $n$ is even or $\Z_n$ if $n$ is odd,
\item Let $I$ be an ordered set and $A_i$ be an abelian group, for each $i\in I$. Then, $${\Gamma\big(\bigoplus_I A_i\big) = \big(\bigoplus_I \Gamma(A_i)\big) \oplus \big(\bigoplus_{i<j} A_i\otimes A_j\big)}.$$ Moreover, the groups $\Gamma(A_i)$ and $A_i\otimes A_j$ are respectively generated by elements $\gamma(a_i)$ and $a_i\otimes a_j$, with $a_i\in A_i$, $a_j\in A_j$, $i < j$, and  $\gamma(a_i+a_j)=\gamma(a_i)+\gamma(a_j)+a_i\otimes a_j$, for $a_i\in A_i$, $a_j\in A_j$, $i<j$, \cite[\textsection 5, \textsection 7]{Whi50}.
\end{enumerate}
\end{proposition}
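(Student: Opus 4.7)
The plan is to unpack the universal property of $\Gamma$ in each case. The backbone of the argument is a pair of preliminary observations about any quadratic map $\eta\colon A\to B$ with associated bilinear pairing $[\eta](a,a') := \eta(a+a')-\eta(a)-\eta(a')$: evaluating bilinearity at $(0,0)$ yields $\eta(0)=0$, while combining $\eta(-a)=\eta(a)$ with the resulting identity $[\eta](a,-a) = -[\eta](a,a)$ yields $[\eta](a,a) = 2\eta(a)$. An easy induction then gives $\eta(na) = n^2 \eta(a)$ for every $n\in \Z$ and every $a\in A$.

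For (1), I would take $\gamma\colon \Z\to \Z$, $n\mapsto n^2$, whose associated pairing is $(a,b)\mapsto 2ab$. Given any quadratic $\eta\colon \Z\to B$, the formula $\eta(n)=n^2\eta(1)$ of the previous paragraph shows that the unique factorisation is the homomorphism $n\mapsto n\eta(1)$. For (2), the same reasoning applied to $\bar 1\in \Z_n$ shows that $\Gamma(\Z_n)$ is cyclic on $\gamma(\bar 1)$ with forced relation $n^2\gamma(\bar 1)=\gamma(\bar 0)=0$. Comparing $\gamma(\overline{k+n})=\gamma(\bar k)$ at $k=0$ and $k=1$ and subtracting yields the stronger relation $2n\gamma(\bar 1)=0$, so the order divides $\gcd(n^2,2n)$, which equals $n$ for $n$ odd and $2n$ for $n$ even. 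This bound is attained by the explicit quadratic maps $\bar k\mapsto \overline{k^2}$ into $\Z_n$ (resp.\ $\Z_{2n}$); well-definedness reduces to checking $(k+n)^2\equiv k^2$ in the target, which holds by the parity hypothesis.

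For (3), I would propose the candidate group $G := \bigl(\bigoplus_I \Gamma(A_i)\bigr)\oplus\bigl(\bigoplus_{i<j} A_i\otimes A_j\bigr)$ together with the set map $\bigoplus_I A_i \to G$ sending $\sum a_i$ to $\sum_i \gamma(a_i) + \sum_{i<j} a_i\otimes a_j$. Its associated pairing is the orthogonal sum of the pairings already built on each $\Gamma(A_i)$ together with the tensor pairings $A_i\otimes A_j$, hence bilinear. Universality follows by decomposing any quadratic $\eta$ on $\bigoplus_I A_i$ into its restrictions $\eta|_{A_i}$, each giving a map $\Gamma(A_i)\to B$ by universality on the summand, and its cross-effects $[\eta]|_{A_i\times A_j}$ for $i<j$, each giving $A_i\otimes A_j\to B$ by bilinearity. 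Assembling these yields the required factorisation $\eta^\square\colon G\to B$, and its uniqueness is forced because the listed generators already span $G$. I expect the main obstacle to be the book-keeping around the ordering $i<j$: one has to be careful that the symmetric bilinear part of $[\eta]$ is counted exactly once (this is precisely what the choice $i<j$ encodes), and that the relation $\gamma(a_i+a_j)=\gamma(a_i)+\gamma(a_j)+a_i\otimes a_j$ emerges tautologically from the candidate map rather than as an extra imposed constraint.
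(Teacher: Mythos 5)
Your proposal is correct, but there is nothing in the paper to compare it against: the proposition is stated with a citation to Baues \cite[pp.\ 16--17]{Bau96} and Whitehead \cite{Whi50}, and no proof is given in the text. Your argument is a sound self-contained verification from the universal property as defined in Section~\ref{sec:classification}. The preliminary identities $\eta(0)=0$, $[\eta](a,a)=2\eta(a)$ and $\eta(na)=n^2\eta(a)$ are established correctly; the models $(\Z,\ n\mapsto n^2)$ for (1) and the order computation $\gcd(n^2,2n)$ together with the explicit quadratic maps $\bar k\mapsto \overline{k^2}$ realising the bound for (2) are all fine; and in (3) the candidate map $\sum a_i\mapsto \sum_i\gamma(a_i)+\sum_{i<j}a_i\otimes a_j$ does have bilinear cross-effect and does satisfy the universal property by splitting any quadratic $\eta$ into its restrictions and cross-effects. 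Two small points are worth making explicit. First, in (2) the claim that $\Gamma(\Z_n)$ is \emph{cyclic} on $\gamma(\bar 1)$ uses that the image of the universal quadratic map generates $\Gamma(A)$; this follows from the uniqueness clause of the universal property (if $H\le\Gamma(A)$ is the subgroup generated by $\gamma(A)$ and $q\colon\Gamma(A)\to\Gamma(A)/H$ is the quotient, then $q\circ\gamma=0=0\circ\gamma$ forces $q=0$), and the same fact underlies the uniqueness step in (3), since the cross terms $a_i\otimes a_j=\gamma(a_i+a_j)-\gamma(a_i)-\gamma(a_j)$ lie in that subgroup. Second, in (3) one should note that elements of $\bigoplus_I A_i$ have finite support, so all the displayed sums are finite even when $I$ is infinite. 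Neither point is a gap, only a place where a referee would ask you to spell out a line.
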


We now introduce Whitehead's exact sequence. Let $X$ be a  simply connected $CW$-complex. For $n\ge 1$, the $n$-th Whitehead $\Gamma$-group of $X$ is defined as $$\Gamma_n(X)=\Im\big(i_*\colon \pi_n (X^{n-1}) \to \pi_n (X^n)\big).$$
Here, $i\colon X^{n-1}\to X^n$ is the inclusion of the $(n-1)$-skeleton of $X$ into its $n$-skeleton. Then, $\Gamma_n(X)$ is an abelian group for $n\ge 1$. This group can be embedded into a long exact sequence of abelian groups
\begin{equation}\label{eq:whiteheadseq}
\cdots\to H_{n+1}(X)\xrightarrow{b_{n+1}} \Gamma_n(X) \xrightarrow{i_{n-1}} \pi_{n}(X)\xrightarrow{h_{n}} H_{n}(X)\to \cdots
\end{equation}
where $h_n$ is the Hurewicz homomorphism and $b_{n+1}$ is a boundary representing the attaching maps.

For each $n\ge 2$, a functor $\Gamma_n^1\colon\text{Ab}\to \text{Ab}$ is defined as follows. Let $\Gamma_2^1 = \Gamma$ be the universal quadratic functor, and for $n\ge 3$, $\Gamma_n^1 = -\otimes \Z_2$. It turns out that if $X$ is $(n-1)$-connected, then $\Gamma_n^1\big(H_n(X)\big)\cong \Gamma_{n+1}(X)$, \cite[Theorem 2.1.22]{Bau96}. Thus, the final part of the long exact sequence \eqref{eq:whiteheadseq} can be written as
\begin{equation}\label{eq:gammaseq}
H_{n+2}(X)\xrightarrow{b_{n+2}} \Gamma_n^1\big(H_n(X)\big)\xrightarrow{i_n} \pi_{n+1}(X)\xrightarrow{h_{n+1}} H_{n+1}(X)\to 0.
\end{equation}

Now, for each $n\ge 2$, we define the category of $A_n^2$-polyhedra as the category whose objects are $(n+2)$-dimensional $(n-1)$-connected $CW$-complexes and whose morphisms are continuous maps between objects. Homotopy types of these spaces are classified through isomorphism classes in a category whose objects are sequences like \eqref{eq:gammaseq}, \cite[Ch. I, \textsection 8]{Bau91}:
\begin{definition}\label{def:gammasequences}(\cite[Ch. IX, \textsection 4]{Bau89})
	Let $n\ge 2$ be an integer. We define the category of $\Gamma$-sequences$^{n+2}$ as follows. Objects are exact sequences of abelian groups
	\[H_{n+2}\to \Gamma_n^1(H_n)\to \pi_{n+1}\to H_{n+1}\to 0\]
	where $H_{n+2}$ is free abelian. Morphisms are triples of group homomorphisms $f=(f_{n+2},f_{n+1},f_n)$, $f_i\colon H_i\to H'_i$, such that there exists a group homomorphism $\Omega\colon \pi_{n+1}\to \pi'_{n+1}$  making the following diagram
	\begin{center}
\begin{tikzpicture}
\tikzset{node distance=0.15\textwidth, auto}
\node(1) {$H_{n+2}$};
\node[right of=1] (2) {$\Gamma_n^1(H_n)$};
\node[right of=2] (3) {$\pi_{n+1}$};
\node[right of=3] (4) {$H_{n+1}$};
\node[right of=4] (5) {$0$};
\node[below of=1, yshift=0.07cm] (6) {$H'_{n+2}$};
\node[right of=6] (7) {$\Gamma_n^1(H'_n)$};
\node[right of=7] (8) {$\pi'_{n+1}$};
\node[right of=8] (9) {$H'_{n+1}$};
\node[right of=9] (10) {$0$};
\draw[->](1) to node {} (2);
\draw[->](2) to node {} (3);
\draw[->](3) to node {} (4);
\draw[->](4) to node {} (5);
\draw[->](6) to node{} (7);
\draw[->](7) to node {} (8);
\draw[->](8) to node {} (9);
\draw[->](9) to node {} (10);
\draw[->](1) to node {\small $f_{n+2}$} (6);
\draw[->] (2) to node {\small $\Gamma_n^1(f_n)$} (7);
\draw[->] (3) to node {\small $\Omega$} (8);
\draw[->] (4) to node {\small $f_{n+1}$} (9);
\end{tikzpicture}
\end{center}
commutative. We say that objects in $\Gamma\text{-sequences}^{n+2}$ are $\Gamma$-sequences, and morphisms in the category are called $\Gamma$-morphisms.
\end{definition}

On the one hand, we can assign to an $A_n^2$-polyhedron $X$, an object in $\Gamma$-sequences$^{n+2}$ by considering the associated exact sequence, \eqref{eq:gammaseq}. We call such an object the \emph{$\Gamma$-sequence of $X$}. On the other hand, to a continuous map $\alpha\colon X\to X'$ of $A_n^2$-polyhedra we can assign a morphism between the corresponding $\Gamma$-sequences by considering the induced homomorphisms
\begin{center}
\begin{tikzpicture}
\tikzset{node distance=0.15\textwidth, auto}
\node(1) {$H_{n+2}(X)$};
\node[right of=1, xshift=0.5cm] (2) {$\Gamma_n^1\big(H_n(X)\big)$};
\node[right of=2, xshift=0.6cm] (3) {$\pi_{n+1}(X)$};
\node[right of=3, xshift=0.4cm] (4) {$H_{n+1}(X)$};
\node[right of=4] (5) {$0$};
\node[below of=1] (6) {$H_{n+2}(X')$};
\node[right of=6, xshift=0.5cm] (7) {$\Gamma_n^1\big(H_n(X')\big)$};
\node[right of=7, xshift=0.6cm] (8) {$\pi_{n+1}(X')$};
\node[right of=8, xshift=0.4cm] (9) {$H_{n+1}(X')$};
\node[right of=9] (10) {$0$.};
\draw[->](1) to node {} (2);
\draw[->](2) to node {} (3);
\draw[->](3) to node {} (4);
\draw[->](4) to node {} (5);
\draw[->](6) to node {} (7);
\draw[->](7) to node {} (8);
\draw[->](8) to node {} (9);
\draw[->](9) to node {} (10);
\draw[->](1) to node {\small $H_{n+2}(\alpha)$} (6);
\draw[->] (2) to node {\small $\Gamma_n^1\big(H_n(\alpha)\big)$} (7);
\draw[->] (3) to node {\small $\pi_{n+1}(\alpha)$} (8);
\draw[->] (4) to node {\small $H_{n+1}(\alpha)$} (9);
\end{tikzpicture}
\end{center}
Therefore, we have a functor $A_n^2\text{-polyhedra}\to \Gamma\text{-sequences}^{n+2}$ which clearly restricts to the homotopy category of $A_n^2$-polyhedra, $\mathcal{H}oA_n^2\text{-polyhedra}$. It is obvious that this functor sends homotopy equivalences to isomorphisms between the corresponding $\Gamma$-sequences. Thus, we can classify homotopy types of $A_n^2$-polyhedra through isomorphism classes of $\Gamma$-sequences:

\begin{theorem}[{\cite[Ch. I, \textsection 8]{Bau91}}]\label{th:detectingfunctor}
	The functor $\mathcal{H}oA_n^2\emph{-polyhedra}\to \Gamma\emph{-sequences}^{n+2}$ previously defined is full. Moreover, for any object in $\Gamma\emph{-sequences}^{n+2}$, there exists an $A_n^2$-polyhedron whose $\Gamma$-sequence is the given object in $\Gamma\emph{-sequences}^{n+2}$. In fact, there exists a 1--1 correspondence between homotopy types of $A_n^2$-polyhedra and isomorphism classes of $\Gamma$-sequences.
\end{theorem}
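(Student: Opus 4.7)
My plan is to establish, in order, three statements: (a) every $\Gamma$-sequence is realised by some $A_n^2$-polyhedron, (b) the functor is full, and (c) it induces the claimed $1$--$1$ correspondence on isomorphism classes. For \textbf{realization}, given a $\Gamma$-sequence with free $H_{n+2}$, I would build $X$ by attaching cells inductively. Choose a free presentation $0 \to R \to F \twoheadrightarrow H_n \to 0$ and form the $n$-skeleton $X^n = \bigvee S^n$ indexed by a basis of $F$; then attach $(n+1)$-cells along maps representing a basis of $R \subset \pi_n(X^n) = F$. Call the result $Y$; it is an $A_n^1$-polyhedron with $H_n(Y) = H_n$, and by \eqref{eq:gammaseq} applied to the $(n+1)$-dimensional $Y$ one obtains a short exact sequence $0 \to \Gamma_n^1(H_n) \to \pi_{n+1}(Y) \to H_{n+1}(Y) \to 0$ with $H_{n+1}(Y)$ free abelian. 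I would wedge on enough extra $(n+1)$-spheres so that $H_{n+1}(Y)$ surjects onto the prescribed $H_{n+1}$, and then attach $(n+2)$-cells along classes in $\pi_{n+1}(Y)$ that jointly realise the boundary $b_{n+2}\colon H_{n+2} \to \Gamma_n^1(H_n)$ together with the kernel of $H_{n+1}(Y) \twoheadrightarrow H_{n+1}$. The identification $\Gamma_n^1(H_n(X)) \cong \Gamma_{n+1}(X)$ of \cite[Theorem 2.1.22]{Bau96} is essential to verify that the resulting $X$ has the prescribed $\Gamma$-sequence.

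For \textbf{fullness}, given a $\Gamma$-morphism $(f_{n+2}, f_{n+1}, f_n)$ with mediating $\Omega$, I would construct $\alpha \colon X \to X'$ skeleton by skeleton. On $X^n = \bigvee S^n$ choose $\alpha_n$ realising a lift of $f_n$ to $\pi_n(X^n) \to \pi_n({X'}^n)$, possible since $\pi_n(X^n)$ is free. Extending to $X^{n+1}$ is unobstructed: the obstructions lie in $\pi_n({X'}^{n+1}) = H_n(X')$, and for each $(n+1)$-cell the obstruction equals $f_n$ of the class of its attaching map in $H_n(X)$, which is $0$. I would then select, among the available extensions, one whose induced map on $H_{n+1}(X^{n+1})$ descends to $f_{n+1}$, using the freedom in the extension parametrised by elements of $\pi_{n+1}({X'}^{n+1})$. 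Finally, extending over the $(n+2)$-cells is governed by obstructions in $\pi_{n+1}(X')$; the existence of $\Omega$ making the diagram of Definition \ref{def:gammasequences} commute is exactly what forces these obstructions to vanish.

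For the \textbf{$1$--$1$ correspondence}, surjectivity onto isomorphism classes is immediate from (a). For injectivity, any isomorphism of $\Gamma$-sequences between $X$ and $X'$ is realised, by (b), by a continuous map inducing isomorphisms on all homology groups; as $X$ and $X'$ are simply connected CW-complexes, Whitehead's theorem yields a homotopy equivalence.

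The hardest step will be realization. The $(n+2)$-cells' attaching classes must simultaneously encode the boundary $b_{n+2}$ and the kernel of the chosen surjection onto $H_{n+1}$, and one must verify that the resulting extension $0 \to \Gamma_n^1(H_n)/\Im b_{n+2} \to \pi_{n+1}(X) \to H_{n+1} \to 0$ matches the prescribed one. The nonlinearity of $\Gamma_n^1$ (quadratic when $n = 2$, linear mod $2$ when $n \geq 3$, cf.\ Proposition \ref{prop:propertiesgammafunctor}) makes this genuinely homotopical and not a matter of linear algebra, which is precisely why the $\Gamma$-sequence carries information beyond $H_*(X)$ and classifies homotopy types.
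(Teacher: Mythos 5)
The paper does not prove this statement at all: it is imported verbatim from Baues \cite[Ch.\ I, \S 8]{Bau91}, so there is no internal argument to compare yours against. Your sketch is, up to packaging, the standard proof from the cited source (cell-by-cell construction for realization, obstruction theory for fullness, homology Whitehead for the correspondence), and the outline is sound; in particular you correctly isolate the identification $\Gamma_{n+1}(X)\cong\Gamma_n^1\big(H_n(X)\big)$ and the freeness of $H_{n+2}$ as the ingredients that make it work. Two steps are asserted rather than argued, and they are where the actual content sits. For realization, matching the prescribed extension $0\to\coker b_{n+2}\to\pi_{n+1}\to H_{n+1}\to 0$ (which you flag honestly) is handled by noting that $\pi_{n+1}(Y)\cong\Gamma_n^1(H_n)\oplus H_{n+1}(Y)$ splits because $H_{n+1}(Y)$ is free, and that the kernel $K$ of $H_{n+1}(Y)\twoheadrightarrow H_{n+1}$ is free, so the attaching classes of the $(n+2)$-cells indexed by a basis of $K$ may be perturbed by elements of $\Gamma_n^1(H_n)$ to hit any prescribed extension class, while the cells indexed by a basis of $H_{n+2}$ realize $b_{n+2}$ without contributing to the cellular boundary. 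For fullness, the claim that the freedom over the $(n+1)$-cells lets you ``descend to $f_{n+1}$'' needs justification: one should first fix a chain map of cellular chain complexes realizing $(f_{n+2},f_{n+1},f_n)$ (possible since all chain groups are free) and then use surjectivity of the Hurewicz map $\pi_{n+1}({X'}^{n+1})\to H_{n+1}({X'}^{n+1})$ to adjust the geometric extension toward that chain map; the vanishing of the final obstruction in $\pi_{n+1}(X')$ then rests specifically on the commutativity $\Gamma_n^1(f_n)\circ b_{n+2}=b'_{n+2}\circ f_{n+2}$ of the left-hand square. These are fillable gaps rather than errors, but they are precisely the points at which the quadratic nature of $\Gamma_n^1$ enters, so a complete write-up must supply them.
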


 Following the ideas of \cite{Ben16}, we introduce the following:
\begin{definition}\label{def:BnX}
Let $X$ be an $A_n^2$-polyhedron. We denote by $\B^{n+2}(X)$ the group of $\Gamma$-isomorphisms of the $\Gamma$-sequence of $X$.
\end{definition}

Let $\Psi\colon \E(X)\to \B^{n+2}(X)$ be the map that associates to $\alpha\in\E(X)$ the $\Gamma$-isomorphism $\Psi(\alpha)=\big(H_{n+2}(\alpha), H_{n+1}(\alpha), H_n(\alpha)\big)$. Then $\Psi$ is a group homomorphism: its kernel is the subgroup of  self-homotopy equivalences inducing the identity map on the homology groups of $X$, that is, $\E_*(X)$. Also, $\Psi$ is onto as a consequence of Theorem \ref{th:detectingfunctor}. Hence,  we immediately obtain the following result.

\begin{proposition}\label{prop:selfequiviso}
	Let $X$ be an $A_n^2$-polyhedron, $n\ge 2$. Then $\B^{n+2}(X) \cong \E(X)/\E_*(X)$.
\end{proposition}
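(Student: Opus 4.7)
The proof is essentially outlined in the paragraph preceding the proposition, so the plan is to verify carefully the four properties of the map $\Psi \colon \E(X) \to \B^{n+2}(X)$ that sends $\alpha$ to the triple $\bigl(H_{n+2}(\alpha), H_{n+1}(\alpha), H_n(\alpha)\bigr)$.

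First I would check that $\Psi$ is well defined, i.e.\ that its image really lies in $\B^{n+2}(X)$. The functor $A_n^2\text{-polyhedra} \to \Gamma\text{-sequences}^{n+2}$ constructed before Theorem \ref{th:detectingfunctor} assigns to $\alpha$ a $\Gamma$-morphism of the $\Gamma$-sequence of $X$ into itself, with $\Omega = \pi_{n+1}(\alpha)$ providing the required commutative ladder; because $\alpha$ is a homotopy equivalence, each $H_i(\alpha)$ is an isomorphism, so this $\Gamma$-morphism is actually a $\Gamma$-isomorphism. Functoriality of $H_*$ then immediately makes $\Psi$ a group homomorphism.

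Next I would identify the kernel of $\Psi$. Since $X$ is $(n-1)$-connected and $(n+2)$-dimensional, the only potentially non-trivial reduced homology groups of $X$ sit in degrees $n$, $n+1$ and $n+2$. Hence $\Psi(\alpha)$ is the identity triple if and only if $\alpha$ induces the identity on every homology group of $X$, which is precisely the defining condition for $\alpha \in \E_*(X)$. Therefore $\ker \Psi = \E_*(X)$.

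The main point, where Theorem \ref{th:detectingfunctor} enters, is the surjectivity of $\Psi$. Given a $\Gamma$-isomorphism $f = (f_{n+2},f_{n+1},f_n)$ of the $\Gamma$-sequence of $X$, fullness of the functor $\mathcal{H}oA_n^2\text{-polyhedra} \to \Gamma\text{-sequences}^{n+2}$ furnishes a continuous map $\alpha \colon X \to X$ with $\Psi(\alpha) = f$. Since $f$ is a $\Gamma$-isomorphism, each $H_i(\alpha)$ is an isomorphism; as $X$ is simply connected, Whitehead's theorem guarantees that $\alpha$ is a homotopy equivalence, so $\alpha \in \E(X)$ and $\Psi(\alpha) = f$. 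The first isomorphism theorem then yields $\E(X)/\E_*(X) \cong \B^{n+2}(X)$.

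The only subtle point is the surjectivity step: one has to remember that Theorem \ref{th:detectingfunctor} only asserts fullness, not faithfulness, and then invoke Whitehead's theorem to upgrade the lifted map $\alpha$ from a mere continuous self-map to a homotopy self-equivalence. Everything else is a direct application of functoriality and of the dimension and connectivity of $X$.
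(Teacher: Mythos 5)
Your proof is correct and follows exactly the route the paper takes, namely the paragraph preceding the proposition: the homomorphism $\Psi$, the identification $\ker\Psi=\E_*(X)$ via the connectivity and dimension of $X$, and surjectivity from the fullness statement of Theorem \ref{th:detectingfunctor} together with Whitehead's theorem. You merely spell out the details the paper leaves implicit, and your remark that fullness only yields a continuous map which must then be upgraded to a homotopy equivalence is a correct and worthwhile clarification.
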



\section{Self-homotopy equivalences of finite type $A_n^2$-polyhedra}\label{sec:generalresults}

Henceforth, an $A_n^2$-polyhedron will mean an $(n-1)$-connected, $(n+2)$-dimensional $CW$-complex of finite type. Recall that for simply connected and finite type spaces, the homology and homotopy groups $H_n(X)$ and $\pi_n(X)$ are finitely generated and abelian for $n\geq 1$.

The $\Gamma$-sequence tool introduced in Section \ref{sec:classification}  will help us to illustrate, from an algebraic point of view, how different restrictions on an $A_n^2$-polyhedron $X$ affect the quotient group $\E(X)/\E_*(X)$. We devote this section to that matter. We also obtain several results that are needed in the proof of Theorem \ref{th:realisabilitygr3} and Theorem \ref{th:realisabilityn2}.
The following result is a generalisation of \cite[Theorem 4.5]{Ben16}.

\begin{proposition}\label{prop:hurewiczonto}
Let $X$ be an $A_n^2$-polyhedron and suppose that the Hurewicz homomorphism $h_{n+2}\colon \pi_{n+2}(X)\to H_{n+2}(X)$ is onto. Then, every automorphism of $H_{n+2}(X)$ is realised by a self-homotopy equivalence of $X$.
\end{proposition}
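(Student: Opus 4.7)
The plan is to translate the question into the algebra of $\Gamma$-sequences via Proposition \ref{prop:selfequiviso} and then exploit the hypothesis to make the boundary map $b_{n+2}$ vanish, so that $\Gamma$-isomorphisms can be prescribed freely on $H_{n+2}(X)$.

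First I would use exactness of Whitehead's long sequence \eqref{eq:whiteheadseq} at $H_{n+2}(X)$. The relevant segment reads
\[
\pi_{n+2}(X) \xrightarrow{h_{n+2}} H_{n+2}(X) \xrightarrow{b_{n+2}} \Gamma_{n+1}(X),
\]
so $\ker b_{n+2} = \Im h_{n+2}$. The hypothesis that $h_{n+2}$ is onto therefore forces $b_{n+2} = 0$ in the $\Gamma$-sequence \eqref{eq:gammaseq} of $X$.

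Next, given an arbitrary automorphism $\phi \in \Aut\bigl(H_{n+2}(X)\bigr)$, I would consider the triple $f = (\phi,\, \id_{H_{n+1}(X)},\, \id_{H_n(X)})$ together with the candidate lift $\Omega = \id_{\pi_{n+1}(X)}$, and check that the diagram in Definition \ref{def:gammasequences} commutes. The rightmost two squares are trivial since every vertical arrow is the identity; the leftmost square requires $b_{n+2} \circ \phi = \Gamma_n^1(\id) \circ b_{n+2}$, which holds automatically because both sides are zero. Hence $f$ is a $\Gamma$-isomorphism of the $\Gamma$-sequence of $X$, i.e.\ $f \in \B^{n+2}(X)$.

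Finally, by Proposition \ref{prop:selfequiviso} the homomorphism $\Psi\colon \E(X) \to \B^{n+2}(X)$ is surjective, so there is a self-homotopy equivalence $\alpha \in \E(X)$ with $\Psi(\alpha) = f$; in particular $H_{n+2}(\alpha) = \phi$. There is no real obstacle here once the hypothesis is used to kill $b_{n+2}$: the whole point of the argument is that Proposition \ref{prop:selfequiviso} reduces realisability questions for $\E(X)$ to purely algebraic realisability questions inside the $\Gamma$-sequence, and the vanishing of $b_{n+2}$ trivialises the only compatibility condition that could have prevented $\phi$ from being extended by identities in the other degrees.
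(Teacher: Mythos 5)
Your proposal is correct and follows essentially the same route as the paper's own proof: exactness of Whitehead's sequence at $H_{n+2}(X)$ plus surjectivity of $h_{n+2}$ forces $b_{n+2}=0$, the triple $(\phi,\id,\id)$ with $\Omega=\id$ is then a $\Gamma$-isomorphism, and surjectivity of $\Psi$ (equivalently, fullness of the detecting functor) realises it by a self-homotopy equivalence. No differences worth noting.
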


\begin{proof}
As part of the exact sequence \eqref{eq:whiteheadseq} for $X$ we have:
\[\cdots \to \pi_{n+2}(X) \xrightarrow{h_{n+2}} H_{n+2}(X)\xrightarrow{b_{n+2}} \Gamma_n^1\big(H_n(X)\big) \to \pi_{n+1}(X) \to \cdots\]
Then, since $h_{n+2}$ is onto by hypothesis, $b_{n+2}$ is the trivial homomorphism. Thus, for every $f_{n+2}\in\Aut(H_{n+2}(X))$, $b_{n+2} f_{n+2} = b_{n+2} = 0$, so if $\Omega= \id$, $(f_{n+2},\id,\id)\in \B^{n+2}(X)$. Then there exists $f \in \E(X)$ with $H_{n+2}(f)=f_{n+2}$, $H_{n+1}(f)=\id$, $H_n(f)=\id$.
\end{proof}

We can easily prove that automorphism groups can be realised, a result that can also be obtained as a consequence of \cite[Theorem 2.1]{Rut97}:

\begin{example}\label{ex:GautH}
Let $G$ be a group isomorphic to $\Aut(H)$ for some finitely generated abelian group $H$. Then, for any integer $n\ge 2$, there exists an $A_n^2$-polyhedron $X$ such that $G \cong \B^{n+2}(X)$: take the Moore space $X=M(H, n+1)$,  which in particular is an $A_n^2$-polyhedron. The  $\Gamma$-sequence of $X$ is
\[H_{n+2}(X)=0\to \Gamma_n^1\big(H_n(X)\big)=0\to H\xrightarrow{=} H\to 0.\]
Then, for every $f\in \Aut(H)$, by taking $\Omega=f$ we see that $(\id, f,\id)\in \B^{n+2}(X)$, and those are the only possible $\Gamma$-isomorphisms. Thus $\B^{n+2}(X)\cong \Aut(H) \cong  G$.
\end{example}

The use of Moore spaces is not required in the $n=2$ case:
\begin{example}\label{ex:notmoore}
	 Let $G$ be a group isomorphic to $\Aut(H)$ for some finitely generated abelian group $H$. Consider the following object in $\Gamma$-sequences$^4$
	\begin{equation}\label{eq:gammaseqnotmoore}
		\Z\xrightarrowdbl{b_4} \Gamma(\Z_2)=\Z_4\to H\xrightarrow{=} H\to 0.
	\end{equation}
	By Theorem \ref{th:detectingfunctor}, there exists an $A_2^2$-polyhedron $X$ realising this object. In particular, $H_4(X)=\Z$, $H_3(X)=\pi_3(X)=H$ and $H_2(X)=\Z_2$. It is clear from \eqref{eq:gammaseqnotmoore} that $(\id, f,\id)$ is a $\Gamma$-isomorphism for every $f\in \Aut(H)$. Now $\Aut(\Z_2)$ is the trivial group while $\Aut(\Z)=\{-\id,\id\}$. It is immediate to check that $(-\id, f,\id)$ is not a $\Gamma$-isomorphism since $\id b_4\ne b_4(-\id)$. Then, we obtain that $\B^4(X)\cong \Aut(H)$.
\end{example}

Observe that not every group $G$ is isomorphic to the automorphism group of an abelian group (for example $\Z_p$ if $p$ is odd). Hence, examples from above only provide a partial positive answer to the realisability problem for $\B^{n+2}(X)$.  Indeed, the automorphism group of an abelian group (other than $\mathbb Z_2$)  has elements of even order. The following results go in that direction:

\begin{lemma}\label{lemma:Hnnot2group}
Let $X$ be an $A_n^2$-polyhedron, $n\ge 2$. If $H_n(X)$ is not an elementary abelian $2$-group, then $\B^{n+2}(X)$ has an element of order $2$.
\end{lemma}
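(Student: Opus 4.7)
My plan is to produce an explicit element of order $2$ in $\B^{n+2}(X)$ by taking the triple $(\id,\id,-\id)$ of group homomorphisms on $(H_{n+2}(X),H_{n+1}(X),H_n(X))$, together with $\Omega=\id_{\pi_{n+1}(X)}$. The hypothesis that $H_n(X)$ is not an elementary abelian $2$-group is precisely what guarantees that $-\id_{H_n(X)}\ne\id_{H_n(X)}$, so the triple, once shown to be a $\Gamma$-isomorphism, really has order $2$ rather than being trivial.

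The only nontrivial verification is that the square
\[
\begin{array}{ccc}
\Gamma_n^1(H_n(X)) & \xrightarrow{i_n} & \pi_{n+1}(X) \\
\Gamma_n^1(-\id)\,\downarrow\;\; & & \;\;\downarrow\,\id \\
\Gamma_n^1(H_n(X)) & \xrightarrow{i_n} & \pi_{n+1}(X)
\end{array}
\]
commutes (commutativity of the remaining squares of Definition~\ref{def:gammasequences} is immediate since everything is the identity there). For this it is enough to show $\Gamma_n^1(-\id_A)=\id_{\Gamma_n^1(A)}$ for every abelian group $A$. This is the heart of the argument and I would treat the two cases separately.

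For $n\ge 3$ we have $\Gamma_n^1=-\otimes\Z_2$, so $\Gamma_n^1(-\id_A)=(-\id_A)\otimes\id_{\Z_2}$, and multiplication by $-1$ and by $+1$ coincide on any $\Z_2$-module; hence $\Gamma_n^1(-\id_A)=\id$. For $n=2$, $\Gamma_n^1=\Gamma$ is the universal quadratic functor, and I would invoke the defining property that $\gamma(-a)=\gamma(a)$ for the universal quadratic map $\gamma\colon A\to\Gamma(A)$: this means $\gamma\circ(-\id_A)=\gamma=\id_{\Gamma(A)}\circ\gamma$, and the uniqueness clause in the universal property forces $\Gamma(-\id_A)=(\gamma\circ(-\id_A))^\square=\id_{\Gamma(A)}$.

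The only potential subtlety I anticipate is remembering that quadratic maps are even in the sense $\gamma(-a)=\gamma(a)$ (item (1) in the definition of quadratic map), and not getting confused by the fact that on a decomposition $A=A_1\oplus A_2$ the functor $\Gamma$ does introduce cross terms $A_1\otimes A_2$ on which partial sign changes act nontrivially; here we avoid that pitfall entirely by applying $-\id$ to \emph{all} of $H_n(X)$, which is the reason $\Gamma_n^1(-\id)=\id$ globally. With this in hand, $(\id,\id,-\id)$ is a $\Gamma$-automorphism of the $\Gamma$-sequence of $X$, its square is the identity, and it is nontrivial by the hypothesis on $H_n(X)$, yielding the desired element of order $2$ in $\B^{n+2}(X)$.
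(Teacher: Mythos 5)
Your proof is correct and follows exactly the paper's argument: the paper also takes the triple $(\id,\id,-\id)$ and observes that $\Gamma_n^1(-\id)=\id$ for every $n\ge 2$, with the non-elementary-abelian-$2$ hypothesis guaranteeing that $-\id$ is a genuine involution on $H_n(X)$. You merely spell out the verification of $\Gamma_n^1(-\id)=\id$ (via $-\otimes\Z_2$ for $n\ge3$ and the evenness of quadratic maps for $n=2$) that the paper leaves implicit.
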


\begin{proof}
Since $H_n(X)$ is not an elementary abelian $2$-group, it admits a non-trivial involution $-\id\colon H_n(X)\to H_n(X)$. But $\Gamma_n^1(-\id)=\id$ for every $n\ge 2$, so $(\id,\id,-\id)\in \B^{n+2}(X)$ and the result follows.
\end{proof}

We point out a key difference between the $n=2$ and the $n\ge 3$ cases:  $\Gamma_2^1(A)=\Gamma(A)$ is never an elementary abelian $2$-group when $A$ is  finitely generated and abelian, as it can be deduced from Proposition \ref{prop:propertiesgammafunctor}. However, for $n\ge 3$, $\Gamma_n^1(A)=A\otimes\Z_2$ is always an elementary abelian $2$-group. Taking advantage of this fact we can prove the following result:

\begin{lemma}\label{lemma:homologynot2group}
Let $X$ be an $A_n^2$-polyhedron, $n\ge 3$. If any of the homology groups of $X$ is not an elementary abelian $2$-group (in particular, if $H_{n+2}(X)\ne 0$), then $\B^{n+2}(X)$ contains a non trivial element of order $2$.
\end{lemma}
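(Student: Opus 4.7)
The plan is to produce a non-trivial involution in $\B^{n+2}(X)$ by a case analysis on which homology group of $X$ fails to be elementary abelian $2$. The case of $H_n(X)$ is already handled by Lemma \ref{lemma:Hnnot2group}, so I only need to treat $H_{n+1}(X)$ and $H_{n+2}(X)$. The key engine in both cases is that, for $n\ge 3$, $\Gamma_n^1(H_n(X)) = H_n(X)\otimes \Z_2$ is annihilated by $2$, so the image $\Im(i_n)=\coker b_{n+2}$ and, a fortiori, any element in $\Gamma_n^1(H_n(X))$, is $2$-torsion.

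First I would settle the case $H_{n+2}(X)\neq 0$ (this covers the \textit{in particular} clause). Since $H_{n+2}(X)$ is free abelian, the involution $-\id$ on $H_{n+2}(X)$ is non-trivial. I would propose the triple $(-\id,\id,\id)$ together with $\Omega=\id$. The only non-trivial commutativity requirement is $\Gamma_n^1(\id)\,b_{n+2}=b_{n+2}(-\id)$, i.e.\ $2b_{n+2}=0$, which is automatic because $\Gamma_n^1(H_n(X))$ is $2$-torsion. Hence $(-\id,\id,\id)\in\B^{n+2}(X)$ is a non-trivial element of order $2$. A similar argument works whenever $H_{n+2}(X)$ is not elementary abelian $2$, since for free abelian $H_{n+2}(X)$ this is equivalent to $H_{n+2}(X)\neq 0$.

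Next I would treat the case in which $H_{n+1}(X)$ is not an elementary abelian $2$-group, so that $-\id\neq \id$ on $H_{n+1}(X)$. Consider the triple $(\id,-\id,\id)$. The squares involving $b_{n+2}$ and $i_n$ place no constraint beyond the existence of a homomorphism $\Omega\colon \pi_{n+1}(X)\to \pi_{n+1}(X)$ restricting to the identity on $\Im(i_n)$ and inducing $-\id$ on the quotient $H_{n+1}(X)$. Equivalently, writing $K=\Im(i_n)=\coker b_{n+2}$, I need to lift $-\id\in \Aut(H_{n+1}(X))$ across the short exact sequence
\[
0\to K\to \pi_{n+1}(X)\xrightarrow{h_{n+1}} H_{n+1}(X)\to 0,
\]
fixing $K$ pointwise. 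Such a lift exists if and only if the extension class $e\in \Ext^1(H_{n+1}(X),K)$ satisfies $(-\id)^*e = e$, that is, $2e=0$. Since $K$ is a quotient of $\Gamma_n^1(H_n(X))$ it is elementary abelian $2$ (as $n\ge 3$), so $\Ext^1(H_{n+1}(X),K)$ is itself annihilated by $2$, and the lift $\Omega$ exists. Thus $(\id,-\id,\id)$ is a $\Gamma$-isomorphism of order $2$, non-trivial by choice of $H_{n+1}(X)$.

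The main obstacle is precisely the $H_{n+1}(X)$ case, where the extension $0\to K\to \pi_{n+1}(X)\to H_{n+1}(X)\to 0$ may be non-split, so the involution $-\id$ does not lift in a naive way. The trick is to package the obstruction into an $\Ext^1$ class and exploit the $2$-torsion nature of $K$, a feature that is \emph{only} available because $n\ge 3$ forces $\Gamma_n^1 = -\otimes\Z_2$; this is the same dichotomy that Lemma \ref{lemma:Hnnot2group} already flagged and that will later fail for $n=2$.
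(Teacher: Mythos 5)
Your proof is correct, but it takes a different and noticeably heavier route than the paper's. The paper produces a single uniform witness: with $\Omega=-\id$ on $\pi_{n+1}(X)$, the triple $(-\id,-\id,-\id)$ is always a $\Gamma$-isomorphism when $n\ge 3$ --- every square commutes because $\Gamma_n^1(-\id)=\id$ and $2$ annihilates $\Gamma_n^1\big(H_n(X)\big)$, hence $-b_{n+2}=b_{n+2}$ and $-i_n=i_n$ --- and this triple is non-trivial of order $2$ exactly when some $H_i(X)$ fails to be elementary abelian $2$. You instead split into cases and, for the $H_{n+1}(X)$ case, invoke the $\Ext^1$ criterion for lifting $(\id_K,-\id)$ across the extension $0\to K\to\pi_{n+1}(X)\to H_{n+1}(X)\to 0$, using that $\Ext^1\big(H_{n+1}(X),K\big)$ is $2$-torsion. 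That argument is valid (and the obstruction-theoretic framing is the ``right'' general reason a lift exists), but it is overkill here: since $K=\Im(i_n)$ is itself $2$-torsion, the explicit lift $\Omega=-\id$ already fixes $K$ pointwise and induces $-\id$ on the quotient, so no appeal to $\Ext$ is needed. Your case analysis also correctly reduces the $H_n(X)$ case to Lemma \ref{lemma:Hnnot2group} and handles $H_{n+2}(X)\ne 0$ via $(-\id,\id,\id)$; together with the observation that all other homology groups vanish, the cases are exhaustive. In short: same underlying mechanism ($2$-torsion of $\Gamma_n^1$ for $n\ge3$), but the paper packages it into one explicit involution where you build three separate ones, one of them by obstruction theory.
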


\begin{proof}
Under our assumptions, $\Gamma_n^1\big(H_n(X)\big)$ is an elementary abelian $2$-group. For  $\Omega=-\id$, the triple $(-\id,-\id,-\id)$ is a $\Gamma$-isomorphism of order $2$ unless $H_{n+2}(X)$, $H_{n+1}(X)$ and $H_n(X)$ are all elementary abelian $2$-groups.
\end{proof}

We remark that this result does not hold for $A_2^2$-polyhedra. Indeed, if we consider the construction in Example \ref{ex:notmoore} for $H=\Z_2$, then $\B^4(X)\cong \Aut(\Z_2)=\{*\}$ does not contain a non trivial element of order $2$ although $H_4(X)=\Z$ is not an elementary abelian $2$-group.

We now prove some results regarding the finiteness of $\B^{n+2}(X)$:
\begin{proposition}\label{prop:mainfinite}
Let $X$ be an $A_n^2$-polyhedron, $n\geq 2$, with $\rank H_{n+2}(X)\ge 2$ and every element of $\Gamma_n^1\big(H_n(X)\big)$ of finite order. Then $\B^{n+2}(X)$ is an infinite group.
\end{proposition}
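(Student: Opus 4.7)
The plan is to exhibit an infinite family of $\Gamma$-isomorphisms of the form $(f_{n+2}, \id, \id)$ with $\Omega = \id$, so the infinitude of $\B^{n+2}(X)$ is reduced to finding infinitely many $f_{n+2} \in \Aut\bigl(H_{n+2}(X)\bigr)$ satisfying $b_{n+2} \circ f_{n+2} = b_{n+2}$.

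First I would set up the finiteness of $F := \Gamma_n^1\bigl(H_n(X)\bigr)$. Since $X$ is of finite type, $H_n(X)$ is a finitely generated abelian group; inspecting Proposition \ref{prop:propertiesgammafunctor} (for $n=2$) or using $\Gamma_n^1 = -\otimes \Z_2$ (for $n\ge 3$) shows that $F$ is also finitely generated. Combined with the hypothesis that every element of $F$ has finite order, $F$ is therefore a finite abelian group. Let $m$ denote its exponent, so that $mF = 0$. Meanwhile, by the definition of $\Gamma$-sequences$^{n+2}$, $H_{n+2}(X)$ is free abelian, and by hypothesis of rank $k \ge 2$, so fix an identification $H_{n+2}(X) \cong \Z^k$.

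Next, for each integer $j$, consider the elementary unipotent matrix $U_j = I_k + jm\, E_{12} \in \GL_k(\Z)$, where $E_{12}$ is the matrix with a $1$ in position $(1,2)$ and zeros elsewhere. These form an infinite cyclic subgroup of $\Aut\bigl(H_{n+2}(X)\bigr)$. I would check commutativity of the required square: since $jm\, E_{12}$ has image contained in $m\Z^k$, and $b_{n+2}(m\Z^k) \subseteq mF = 0$, we get
\[
b_{n+2}\circ U_j \;=\; b_{n+2} + jm \, (b_{n+2}\circ E_{12}) \;=\; b_{n+2}.
\]
The other three squares in the defining diagram of a $\Gamma$-morphism are trivially commutative because $f_{n+1}$, $f_n$, and $\Omega$ are all identities. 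Hence each triple $(U_j,\id,\id)$ is a $\Gamma$-isomorphism of the $\Gamma$-sequence of $X$, and these triples are pairwise distinct as $j$ varies, producing infinitely many elements of $\B^{n+2}(X)$.

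There is no real obstacle here beyond the bookkeeping: the key point is that mapping a free abelian group of rank $\ge 2$ into a finite group leaves ample room for nontrivial automorphisms of the source that are congruent to the identity modulo $\ker b_{n+2}$, and unipotent upper-triangular matrices supply an explicit infinite family. The rank $\ge 2$ hypothesis is essential precisely because such nonscalar unipotents do not exist in $\GL_1(\Z) = \{\pm \id\}$.
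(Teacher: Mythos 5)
Your proposal is correct and follows essentially the same route as the paper: both arguments exhibit an infinite family of unipotent automorphisms of $H_{n+2}(X)$ congruent to the identity modulo a multiple annihilated by $b_{n+2}$ (the paper takes the exponent of $b_{n+2}(\Z^2)$ for a split-off $\Z^2$ summand, you take the exponent of all of $\Gamma_n^1\big(H_n(X)\big)$), and both conclude that the resulting triples $(f_{n+2},\id,\id)$ give infinitely many elements of $\B^{n+2}(X)$.
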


\begin{proof}
Since $\rank H_{n+2}(X)\ge 2$, we may write $H_{n+2}(X)=\Z^2\oplus G$, $G$ a  (possibly trivial) free abelian group. Consider the $\Gamma$-sequence of $X$:
\[\Z^2\oplus G \xrightarrow{b_{n+2}} \Gamma_n^1\big(H_n(X)\big) \xrightarrow{i_n} \pi_{n+1}(X) \xrightarrow{h_{n+1}} H_{n+1}(X)\longrightarrow 0.\]
Since $b_{n+2}(\Z^2)\le \Gamma_n^1\big(H_n(X)\big)$ is a finitely generated $\Z$-module with finite order generators, it is a finite group. Define $k = \operatorname{exp}\big(b_{n+2}(\Z^2)\big)$ and consider the automorphism of $\Z^2$ given by the matrix
\[\begin{pmatrix}
1 & k \\ 0 & 1	
\end{pmatrix}\in \GL_2(\Z),
\]
which is of infinite order. If we take $f\oplus \id_G\in \Aut(\Z^2\oplus G)$, then $b_{n+2}(f\oplus \id)=b_{n+2}$, thus $(f\oplus \id_G,\id,\id)\in \B^{n+2}(X)$, which is an element of infinite order.
\end{proof}

As we have previously mentioned, $\Gamma_n^1\big(H_n(X)\big)$ is an elementary abelian $2$-group, for $n\ge 3$. Hence, from Proposition \ref{prop:mainfinite} we get:
\begin{corollary}
Let $X$ be an $A_n^2$-polyhedron, $n\ge 3$, with $\rank H_{n+2}(X)\ge 2$. Then $\B^{n+2}(X)$ is an infinite group.
\end{corollary}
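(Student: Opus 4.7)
The plan is to derive this corollary as an immediate consequence of Proposition \ref{prop:mainfinite}. That proposition has two hypotheses: $\rank H_{n+2}(X) \ge 2$, and every element of $\Gamma_n^1\big(H_n(X)\big)$ being of finite order. The first hypothesis is given, so all that remains is to verify the second in the range $n \ge 3$.

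For $n \ge 3$, recall that $\Gamma_n^1$ is by definition the functor $-\otimes \Z_2$. Since $X$ is of finite type, $H_n(X)$ is finitely generated, hence so is $H_n(X)\otimes \Z_2$. As every element of a tensor product with $\Z_2$ satisfies $2x = 0$, the group $\Gamma_n^1\big(H_n(X)\big) = H_n(X)\otimes \Z_2$ is a finite elementary abelian $2$-group, and in particular all of its elements are of finite order.

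Thus both hypotheses of Proposition \ref{prop:mainfinite} are satisfied, and the conclusion $\B^{n+2}(X)$ infinite follows directly. There is no real obstacle here; the statement is essentially a specialisation of the previous proposition to the setting where the functor $\Gamma_n^1$ automatically produces a torsion group. I would write this as a short proof of two or three lines, simply pointing out that the torsion hypothesis of Proposition \ref{prop:mainfinite} is automatic for $n\ge 3$.
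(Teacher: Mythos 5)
Your proposal is correct and follows exactly the paper's own route: the paper likewise observes that for $n\ge 3$ one has $\Gamma_n^1\big(H_n(X)\big)=H_n(X)\otimes\Z_2$, an elementary abelian $2$-group, so the torsion hypothesis of Proposition \ref{prop:mainfinite} is automatic and the corollary follows immediately. Nothing is missing.
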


This result does not hold, in general, for $n=2$. However, if $A$ is a finite group, Proposition \ref{prop:propertiesgammafunctor} implies that $\Gamma(A)$ is finite as well so from Proposition \ref{prop:mainfinite} we get:

\begin{corollary}
Let $X$ be an $A_2^2$-polyhedron  with $\rank H_{4}(X)\ge 2$ and $H_2(X)$ finite. Then $\B^{4}(X)$ is an infinite group.
\end{corollary}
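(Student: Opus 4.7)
The plan is to derive this corollary directly from Proposition \ref{prop:mainfinite} specialised to $n=2$. The hypothesis $\rank H_{n+2}(X) = \rank H_4(X) \ge 2$ is already given, so the only thing left to verify is that every element of $\Gamma_2^1\bigl(H_2(X)\bigr) = \Gamma\bigl(H_2(X)\bigr)$ has finite order; equivalently, since $H_2(X)$ is finitely generated, that $\Gamma\bigl(H_2(X)\bigr)$ is itself a finite group.

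To check finiteness, I would invoke Proposition \ref{prop:propertiesgammafunctor}. Since $H_2(X)$ is a finitely generated abelian group which is finite by hypothesis, it decomposes as a finite direct sum $H_2(X) = \bigoplus_{i \in I} \Z_{n_i}$ with $I$ finite and each $n_i$ a positive integer. Part (3) of Proposition \ref{prop:propertiesgammafunctor} gives
\[
\Gamma\bigl(H_2(X)\bigr) \;=\; \bigoplus_{i\in I}\Gamma(\Z_{n_i}) \;\oplus\; \bigoplus_{i<j}\Z_{n_i}\otimes\Z_{n_j}.
\]
Each summand $\Gamma(\Z_{n_i})$ is either $\Z_{n_i}$ or $\Z_{2n_i}$ by part (2), and each tensor product $\Z_{n_i}\otimes\Z_{n_j}\cong \Z_{\gcd(n_i,n_j)}$ is also finite. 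Being a finite direct sum of finite groups, $\Gamma\bigl(H_2(X)\bigr)$ is finite, so in particular every element has finite order.

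With this verified, the hypotheses of Proposition \ref{prop:mainfinite} are met for $n=2$, and we conclude that $\B^4(X)$ is infinite. The argument is genuinely routine once Proposition \ref{prop:mainfinite} is in hand; there is no real obstacle, only the small bookkeeping of confirming that $H_2(X)$ finite forces $\Gamma\bigl(H_2(X)\bigr)$ finite. The point of stating this as a separate corollary is precisely to emphasise that, among $A_2^2$-polyhedra, the torsion case in degree $2$ behaves exactly like the higher-$n$ situation covered by the previous corollary.
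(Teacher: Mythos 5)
Your argument is correct and is exactly the one the paper intends: the corollary is deduced by noting that $H_2(X)$ finite forces $\Gamma\bigl(H_2(X)\bigr)$ to be finite via Proposition \ref{prop:propertiesgammafunctor}, so every element of $\Gamma_2^1\bigl(H_2(X)\bigr)$ has finite order and Proposition \ref{prop:mainfinite} applies. You merely spell out the bookkeeping (the cyclic decomposition and the identification of the tensor summands) that the paper leaves implicit.
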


We end this section with one more result on the infiniteness of $\B^{n+2}(X)$:
\begin{proposition}\label{prop:rankfreeHn}
Let $X$ be an $A_n^2$-polyhedron, $n\ge 3$. If $H_n(X)=\Z^2\oplus G$ for a certain abelian group $G$, then $\B^{n+2}(X)$ is an infinite group.
\end{proposition}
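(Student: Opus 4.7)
The plan is to mimic the strategy of Proposition \ref{prop:mainfinite}, but now exploiting that for $n\ge 3$ we have $\Gamma_n^1(A)=A\otimes \Z_2$, so any automorphism of $H_n(X)$ that reduces to the identity modulo $2$ induces the identity on $\Gamma_n^1\bigl(H_n(X)\bigr)$. This lets us build infinitely many $\Gamma$-isomorphisms by acting nontrivially on the $\Z^2$-summand of $H_n(X)$ via matrices congruent to $\id$ mod $2$.

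More concretely, I would first write the $\Gamma$-sequence of $X$ as
\[
H_{n+2}(X) \xrightarrow{b_{n+2}} \Gamma_n^1\bigl(H_n(X)\bigr) \xrightarrow{i_n} \pi_{n+1}(X) \xrightarrow{h_{n+1}} H_{n+1}(X) \to 0,
\]
and note that the hypothesis $H_n(X)=\Z^2\oplus G$ gives
\[
\Gamma_n^1\bigl(H_n(X)\bigr) = (\Z^2\oplus G)\otimes \Z_2 = \Z_2^2 \oplus (G\otimes \Z_2).
\]
For each integer $k$, I would then consider the automorphism
\[
\varphi_k = \begin{pmatrix} 1 & 2k \\ 0 & 1\end{pmatrix}\oplus \id_G \in \Aut\bigl(\Z^2\oplus G\bigr)=\Aut\bigl(H_n(X)\bigr).
\]

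The key observation is that $\varphi_k\otimes \Z_2 = \id$, that is, $\Gamma_n^1(\varphi_k)=\id$ on $\Gamma_n^1\bigl(H_n(X)\bigr)$. Consequently, the triple $(\id, \id, \varphi_k)$ fits into the diagram of Definition \ref{def:gammasequences} with $\Omega=\id$, since all squares become trivially commutative: the left square reads $\Gamma_n^1(\varphi_k)\circ b_{n+2} = b_{n+2}\circ \id$, and the middle and right squares collapse to $\id\circ i_n = i_n\circ \id$ and $\id\circ h_{n+1} = h_{n+1}\circ \id$. Hence $(\id,\id,\varphi_k)\in \B^{n+2}(X)$ for every $k\in\Z$, and these triples are pairwise distinct because the $\varphi_k$ are.

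I do not anticipate any real obstacle: the whole point is that for $n\ge 3$ the functor $\Gamma_n^1$ kills the $2$-divisible part of any endomorphism, so the free rank $\ge 2$ condition on $H_n(X)$ is enough to produce an infinite subgroup of $\B^{n+2}(X)$, namely the cyclic group generated by $(\id,\id,\varphi_1)$. The only point worth being careful about is to check that $\Omega=\id$ does make the full diagram commute; this reduces to the identity $\Gamma_n^1(\varphi_k)=\id$, which follows from $\varphi_k \equiv \id \pmod 2$ together with the description of $\Gamma_n^1$ as tensoring with $\Z_2$.
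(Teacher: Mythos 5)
Your proof is correct and follows essentially the same route as the paper: both arguments observe that $\Gamma_n^1=-\otimes\Z_2$ sends $\Aut(H_n(X))$ to $\Aut(H_n(X)\otimes\Z_2)$ with an infinite kernel coming from the $\Z^2$-summand, and that any $f$ in that kernel yields an element $(\id,\id,f)\in\B^{n+2}(X)$. The only cosmetic difference is that you exhibit the explicit unipotent matrices $\left(\begin{smallmatrix}1&2k\\0&1\end{smallmatrix}\right)$ in the kernel of $\GL_2(\Z)\to\GL_2(\Z_2)$, whereas the paper just cites that this kernel is infinite.
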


\begin{proof}
If $H_n(X)=\Z^2\oplus G$, then $\Gamma_n^1\big(H_n(X)\big) = H_n(X)\otimes \Z_2 = \Z_2^2\oplus (G\otimes \Z_2)$. Hence $\GL_2(\Z)\le \Aut\big(H_n(X)\big)$ and $\GL_2(\Z_2)\le \Aut\big(H_n(X)\otimes \Z_2\big)$. Moreover, for every $f\in \GL_2(\Z)$ we have $f\oplus \id_G\in \Aut\big(H_n(X)\big)$ which yields, through $\Gamma_n^1$, an automorphism $(f\oplus \id_G)\otimes \Z_2 = (f\otimes \Z_2)\oplus \id_{G\otimes \Z_2}\in \Aut\big(H_n(X)\otimes \Z_2\big)$. This means that the functor $\Gamma_n^1$ restricts to $\GL_2(\Z)\to \GL_2(\Z_2)$. Moreover, $-\otimes \Z_2 \colon \GL_2(\Z)\to \GL_2(\Z_2)$ has an infinite kernel. Hence, there are infinitely many morphisms $f\in \Aut\big(H_n(X)\big)$ such that $f\otimes \Z_2=\id$. For any such a morphism $f$,  $(\id,\id, f)$ is an element of $\B^{n+2}(X)$. Therefore $\B^{n+2}(X)$ is infinite.
\end{proof}


\section{Obstructions to the realisability of groups}\label{sec:obstructions}

We have seen in Section \ref{sec:generalresults} that the group  $\B^{n+2}(X)$ contains elements of even order unless strong restrictions are imposed on the homology groups of the $A_n^2$-polyhedron $X$. Since we are interested in realising an arbitrary group $G$ as $\B^{n+2}(X)$ for $X$ a finite-type $A_n^2$-polyhedron, in this section we focus our attention on the remaining situations and prove Theorems \ref{th:realisabilitygr3} and \ref{th:realisabilityn2}.
We first give some previous results:

\begin{lemma}\label{lemma:gammainjective}
For $G$ an elementary abelian $2$-group, $\Gamma(-)\colon \Aut(G)\to\Aut\big(\Gamma(G)\big)$ is injective.
\end{lemma}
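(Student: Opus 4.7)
The plan is to reduce the injectivity to a direct computation using the explicit description of $\Gamma$ on elementary abelian $2$-groups. First, since we work in the finite type setting, $G$ is finitely generated, hence $G = \bigoplus_{i=1}^m \Z_2$ for some $m$, with basis $e_1,\ldots,e_m$. By Proposition \ref{prop:propertiesgammafunctor}(2) and (3),
\[
\Gamma(G) \;\cong\; \bigoplus_{i=1}^m \Z_4 \;\oplus\; \bigoplus_{1\le i<j\le m}\Z_2,
\]
where $\gamma(e_i)$ is a generator of the $i$-th copy of $\Z_4$ (in particular of order $4$) and $e_i\otimes e_j$ is a generator of the corresponding $\Z_2$.

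Next, I would take $f\in \Aut(G)$ with $\Gamma(f)=\id$ and show that $f(e_i)=e_i$ for every $i$, which is enough. By the very definition of $\Gamma$ on morphisms one has $\Gamma(f)\gamma = \gamma f$, so the hypothesis yields $\gamma\bigl(f(e_i)\bigr) = \gamma(e_i)$ for all $i$. Writing $f(e_i)=\sum_{k\in S_i} e_k$ with $S_i\subseteq\{1,\ldots,m\}$, an iterated application of the quadratic identity $\gamma(a+b)=\gamma(a)+\gamma(b)+a\otimes b$ from Proposition \ref{prop:propertiesgammafunctor}(3) gives
\[
\gamma\bigl(f(e_i)\bigr) \;=\; \sum_{k\in S_i}\gamma(e_k) \;+\; \sum_{\substack{k,\ell\in S_i\\ k<\ell}} e_k\otimes e_\ell.
\]
Projecting the equality $\gamma\bigl(f(e_i)\bigr)=\gamma(e_i)$ onto the $\bigoplus_k \Z_4$ summand, the $k$-th coordinate on the left equals $\gamma(e_k)$ precisely when $k\in S_i$, while on the right it equals $\gamma(e_i)$ precisely when $k=i$. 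Since $\gamma(e_k)$ is nonzero in $\Z_4$, this forces $S_i=\{i\}$, and hence $f=\id$.

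I do not foresee any serious obstacle: the entire content of the lemma is that $\Gamma(\Z_2)=\Z_4$ retains enough information to recover $f$ from $\Gamma(f)$. The analogous statement fails when $\Gamma$ is replaced by $-\otimes\Z_2$, which is precisely the source of the difficulty encountered in the subsequent obstructions for $n\ge 3$.
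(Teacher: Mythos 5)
Your proof is correct and is essentially the same as the paper's: both expand $f(e_i)$ in a basis, apply the identity $\gamma(a+b)=\gamma(a)+\gamma(b)+a\otimes b$ from Proposition \ref{prop:propertiesgammafunctor}(3), and read off from the $\gamma(e_k)$-components that $f(e_i)=e_i$. The only cosmetic difference is that you restrict to finitely generated $G$ (the paper allows an arbitrary ordered generating set $J$, noting that each $f(e_j)$ is still a finite sum), but your argument extends verbatim to that case.
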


\begin{proof}
Let us show that the kernel of $\Gamma(-)$ is trivial. Assume that $G$ is generated by $\{e_j\mid j\in J\}$, $J$ an ordered set. If $f\in \Aut(G)$ is in the kernel of $\Gamma(-)$, then for each $j\in J$, there exists a finite subset $I_j\subset J$ such that $f(e_j)= \sum_{i\in I_j}  e_i$, and
	 \[\gamma(e_j)=\Gamma(f)\gamma(e_j)=\gamma f(e_j)= \gamma\left( \sum_{i\in I_j} e_i \right) = \sum_{i\in I_j} \gamma(e_i)+\sum_{i<k} e_i\otimes e_k,\]
	 as a consequence of Proposition \ref{prop:propertiesgammafunctor}.(3), so $I_j=\{j\}$ and $f(e_j)=e_j$ for every $j\in J$.
 \end{proof}

\begin{lemma}\label{lemma:h3splitIfOddThenEven}
	Let $H_2 = \oplus_{i=1}^n\mathbb{Z}_2$ and $\chi\in\Gamma(H_2)$ be an element of order $4$. If there exists a non trivial automorphism of odd order $f\in\Aut(H_2)$ such that $\Gamma(f)(\chi) =\chi$, then there exists $g\in\Aut(H_2)$ of order $2$ such that $\Gamma(g)(\chi) =\chi$.
\end{lemma}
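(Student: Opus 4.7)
The plan is to decompose $H_2$ under $f$ via Maschke's theorem, reduce the problem to finding an involution in $\Aut(V_2)$ fixing a $2$-torsion element of $\Gamma(V_2)$, and produce one as a transvection in a suitable orthogonal group over $\mathbb{F}_2$.

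First, by replacing $f$ by a prime-order power I may assume $|f| = p$ is an odd prime. Since $p$ is coprime to $2$, Maschke's theorem decomposes the $\mathbb{F}_2[\langle f\rangle]$-module $H_2$ as $H_2 = V_1 \oplus V_2$ with $V_1 = H_2^f$ and $V_2^f = 0$. Note that $V_2 \ne 0$ (since $f \ne \id$), and in fact $\dim_{\mathbb{F}_2} V_2 \ge 2$ because $\mathbb{F}_2$ admits no nontrivial odd-order automorphism. By Proposition \ref{prop:propertiesgammafunctor}(3) one has $\Gamma(H_2) = \Gamma(V_1) \oplus \Gamma(V_2) \oplus (V_1 \otimes V_2)$, and $\Gamma(f)$ acts as $\id \oplus \Gamma(f|_{V_2}) \oplus (\id \otimes f|_{V_2})$. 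Writing $\chi = \chi_1 + \chi_2 + \chi_{12}$ accordingly, the $f$-invariance of $\chi$ forces $\chi_{12} \in (V_1 \otimes V_2)^f = V_1 \otimes V_2^f = 0$; similarly, $2\chi_2 \in 2\Gamma(V_2) \cong V_2$ is $f$-fixed, hence lies in $V_2^f = 0$, giving $\chi_2 \in \Gamma(V_2)[2]$.

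It therefore suffices to exhibit a nontrivial involution $g_2 \in \Aut(V_2)$ with $\Gamma(g_2)(\chi_2) = \chi_2$; taking $g := \id_{V_1} \oplus g_2$ yields a nontrivial involution of $H_2$ with $\Gamma(g)(\chi) = \chi_1 + \Gamma(g_2)(\chi_2) + 0 = \chi$. Writing $\chi_2 = \sum_i 2c_i \gamma(e_i) + \sum_{i<j} b_{ij}(e_i \otimes e_j)$, I would associate to $\chi_2$ the quadratic form $Q \colon V_2 \to \mathbb{F}_2$ with $Q(e_i) = c_i$ and polarization $B(e_i, e_j) = b_{ij}$ for $i<j$. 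A direct check using Proposition \ref{prop:propertiesgammafunctor}(3) shows that this identification $\Gamma(V_2)[2] \cong \mathrm{Quad}(V_2, \mathbb{F}_2)$ is $\Aut(V_2)$-equivariant, so the problem reduces to finding a nontrivial involution in the orthogonal group $O(V_2, Q)$.

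If $Q = 0$, any nontrivial involution in $\GL(V_2)$ (which exists since $\dim V_2 \ge 2$) works. If $Q \ne 0$, I would classify transvections $T_{u,\phi}(x) = x + \phi(x) u$ that preserve $Q$: the condition reduces to either (a) $u \in \mathrm{rad}(B)$ with $Q(u) = 0$, or (b) $B(\cdot, u) = \phi$ with $Q(u) = 1$ (the classical reflection). If neither alternative can be realised nontrivially, then every nonzero $u \in \mathrm{rad}(B)$ satisfies $Q(u) = 1$ and every $u \notin \mathrm{rad}(B)$ satisfies $Q(u) = 0$; combining closure of $\mathrm{rad}(B)$ under addition with the polarization identity $Q(u+v) = Q(u) + Q(v) + B(u,v)$ forces $\dim \mathrm{rad}(B) \le 1$, and a direct inspection of each case then yields $Q \equiv 0$, a contradiction. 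The main technical hurdle will be verifying the $\Aut(V_2)$-equivariance of the identification $\Gamma(V_2)[2] \cong \mathrm{Quad}(V_2, \mathbb{F}_2)$; the remaining case analysis is essentially linear algebra over $\mathbb{F}_2$.
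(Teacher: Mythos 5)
Your overall strategy (Maschke decomposition, reduction to the fixed-point-free part $V_2$, and then a transvection argument) is sound and genuinely different from the paper's, but the step you yourself single out as the main technical hurdle is false as stated: the basis-dependent identification $\Gamma(V_2)[2]\cong\operatorname{Quad}(V_2,\Z_2)$ is \emph{not} $\Aut(V_2)$-equivariant, so the ``direct check'' you defer would not go through. Concretely, take $V_2=\Z_2 e_1\oplus\Z_2 e_2$ and $\chi_2=2\gamma(e_1)$, whose associated form in your recipe is $Q(xe_1+ye_2)=x$. The involution $g$ with $g(e_1)=e_1$, $g(e_2)=e_1+e_2$ satisfies $\Gamma(g)(\chi_2)=2\gamma(e_1)=\chi_2$, yet $Q(g(e_2))=Q(e_1+e_2)=1\neq 0=Q(e_2)$, so $g\notin O(V_2,Q)$; conversely its transpose $h(e_1)=e_1+e_2$, $h(e_2)=e_2$ lies in $O(V_2,Q)$ but sends $\chi_2$ to $2\gamma(e_1+e_2)=2\gamma(e_1)+2\gamma(e_2)\neq\chi_2$. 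The structural reason is that $\Gamma(V_2)[2]$ is an extension of $\Lambda^2V_2$ by $V_2$ (via $v\mapsto 2\gamma(v)$), whereas $\operatorname{Quad}(V_2,\Z_2)\cong\Hom\big(\Gamma(V_2),\Z_2\big)$ is contravariant and is an extension of $\Lambda^2V_2^{*}$ by $V_2^{*}$; these are not isomorphic $\GL(V_2)$-modules in general. The gap is repairable without changing your plan: either identify $\Gamma(V_2)[2]$ equivariantly with quadratic forms on the \emph{dual} space, or observe (as the examples above suggest) that the stabiliser of $\chi_2$ in $\Aut(V_2)$ is the image of $O(V_2,Q)$ under $g\mapsto g^{\mathsf{T}}$. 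Since transposition preserves the order of an element, producing a nontrivial involution in $O(V_2,Q)$ still yields the involution you need; but the equivalence must be stated and proved in this corrected form.

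The remaining steps are correct: the passage to a prime-order power of $f$, the splitting $H_2=V_1\oplus V_2$ with $V_2^{f}=0$ and $\dim V_2\geq 2$, the vanishing of $\chi_{12}$ and of $2\chi_2$, and the $\Z_2$-linear algebra showing that a nonzero quadratic form on a space of dimension at least $2$ admits a nontrivial orthogonal transvection. This is a genuinely different route from the paper's, which instead changes basis to put $\chi$ into the normal form $\gamma(e_1)+\sum_j\alpha_j e_j\otimes e_j+\sum_j\beta_j e_j\otimes e_{j+1}$ and exhibits the involution $g$ explicitly by a ten-row case analysis; notably, for $\rank H_2\geq 3$ the paper's construction never uses $f$ and thus proves the stronger statement that \emph{every} order-$4$ element of $\Gamma(H_2)$ is fixed by some involution, while for $\rank H_2=2$ it shows the hypothesis on $f$ is vacuous. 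Your argument makes essential use of $f$ (to split off $V_2$ and kill the order-$4$ and cross terms there) in exchange for avoiding the normal form and the table.
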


\begin{proof}
	Notice that according to \cite[p.\ 66]{Whi50}, we can write $h\otimes h = 2\gamma(h)$, for any element $h\in H_2$. Therefore, given a basis $\{h_1,h_2,\dots,h_n\}$ of $H_2$, and replacing $3\gamma(h_i)$ by $\gamma(h_i)+ h_i\otimes h_i$ if needed, we can write
$$\chi = \sum_{i=1}^n a(i) \gamma(h_i) + \sum_{i,j=1}^n a(i,j) h_i\otimes h_j,$$
where every coefficient $a(i)$, $a(i,j)$ is either $0$ or $1$. We now construct inductively a basis $\{e_1,e_2,\dots,e_n\}$ of $H_2$ as follows. Without loss of generality, assume $a(1)=1$ and define $e_1=\sum_{i=1}^n a(i) h_i$. Then $\{e_1,h_2,\dots,h_n\}$ is again a basis of $H_2$ and
$$
\chi = \gamma(e_1) + \alpha_1 e_1\otimes e_1 + \\
\beta_1 e_1\otimes(\sum_{s=2}^n b(1,s) h_s) + \sum_{i,j>1}^n a_1(i,j) h_i\otimes h_j,
$$
where every coefficient in the equation is either $0$ or $1$.
Assume a basis $\{e_1,\ldots, e_r, h_{r+1},\dots,h_n\}$ has been constructed such that
\begin{align*}
\chi =&\, \gamma(e_1) + \sum_{j=1}^r \alpha_j e_j\otimes e_j + \sum_{j=1}^{r-1} \beta_j e_j\otimes e_{j+1}\\ &+ \beta_{r} e_{r}\otimes\big(\sum_{s=r+1}^n b(r,s) h_s\big) + \sum_{i,j>r}^n a_r(i,j) h_i\otimes h_j,
\end{align*}
where every coefficient in the equation is either $0$ or $1$. We may assume $b(r,r+1)=1$ and define $e_{r+1}=\sum_{s=r+1}^n b(r,s) h_s$. Thus $\{e_1,\ldots, e_{r+1}, h_{r+2},\dots,h_n\}$ is again a basis of $H_2$ and
\begin{align*}
\chi = &\, \gamma(e_1) + \sum_{j=1}^{r+1} \alpha_j e_j\otimes e_j + \sum_{j=1}^{r} \beta_j e_j\otimes e_{j+1}\\ &+ \beta_{r+1} e_{r+1}\otimes\big(\sum_{s=r+2}^n b(r+1,s) h_s\big) + \sum_{i,j>r+1}^n a_{r+1}(i,j) h_i\otimes h_j.
\end{align*}
Finally, we obtain a basis $\{e_1,e_2,\dots,e_n\}$ of $H_2$ such that
	\begin{equation}\label{eq:b4decompfinal}
	\chi = \gamma(e_1) + \sum_{j=1}^n \alpha_j e_j\otimes e_j + \sum_{j=1}^{n-1} \beta_j e_j\otimes e_{j+1},
	\end{equation}
for some coefficients $\alpha_j\in\{0,1\}$, $j=1,2,\dots,n$, and $\beta_j\in\{0,1\}$, $j = 1,2,\dots,n-1$.

	Now, for $n=1$, $H_2 = \mathbb{Z}_2$ has a trivial group of automorphisms, so the result  holds. For $n = 2$, assume that there exists $f\in\Aut(H_2)$ such that $\Gamma(f)(\chi) =\chi$. From Equation \eqref{eq:b4decompfinal}, $\chi = \Gamma(f)\big(\gamma(e_1)\big) + \Gamma(f)(P)$, where $P\in\Omega_1\big(\Gamma(H_2)\big) = \{h\in\Gamma(H_2) : \ord(h)|2\}$. Then $\Gamma(f)\big(\gamma(e_1)\big)$ has a multiple of $\gamma(e_1)$ as its only summand of order $4$, which implies that $f(e_1) = e_1$. Then either $f(e_2) = e_2$, so $f$ is trivial, or $f(e_2) = e_1 + e_2$, so $f$ has order $2$.

For $n\ge 3,$
we define $g\in\Aut(H_2)$ by $g(e_j) = e_j$, for $j = 1,2,\dots,n-2$, and $g(e_{n-1})$ and $g(e_n)$,  depending on $\alpha_{n-j}$ and $\beta_{n-1-j}$, for $j=0,1$,  in Equation \eqref{eq:b4decompfinal},  according to the following table:

	\[\begin{array}{|c|c|c|c|c|c|}
		\hline
		\alpha_n & \beta_{n-1} & \alpha_{n-1} & \beta_{n-2}  &g(e_{n-1}) &g(e_n) \\\hline
		0 & 0 & \text{$0$ or $1$} & \text{$0$ or $1$} & e_{n-1} & e_{n-1} + e_n \\\hline
		0 & 1 & 0 & 0 & e_n                     & e_{n-1} \\\hline
		0 & 1 & 0 & 1 & e_{n-2} + e_n           & e_{n-2} + e_{n-1} \\\hline
		0 & 1 & 1 & 0 & e_{n-1} + e_n           & e_n \\\hline
		0 & 1 & 1 & 1 & e_{n-2} + e_{n-1} + e_n & e_n \\\hline
		1 & 0 & 0 & 0 & e_{n-2} + e_{n-1}       & e_n \\\hline
		1 & 0 & 0 & 1 & e_{n-2} + e_{n-1}       & e_{n-2} + e_n \\\hline
		1 & 0 & 1 & 0 & e_n                     & e_{n-1} \\\hline
		1 & 0 & 1 & 1 & e_{n-2} + e_{n-1}       & e_n \\\hline
		1 & 1 & \text{$0$ or $1$} & \text{$0$ or $1$} & e_{n-1} & e_{n-1} + e_n \\\hline
	\end{array}\]
A simple computation shows that in all cases $g$ has order $2$ and $\Gamma(g)(\chi) = \chi$, so the result follows.
\end{proof}

\begin{definition}
	Let $f\colon H\xrightarrow{} K$ be a morphism of abelian groups. We say that a non-trivial subgroup $A\le K$ is $f$-split if there exist groups $B\le H$ and $C\le K$ such that $H\cong A\oplus B$, $K= A\oplus C$ and $f$ can be written as $\id_A\oplus g\colon A\oplus B\to A\oplus C$ for some $g\colon B\to C$.
\end{definition}
Henceforward we will make extensive use of this notation applied to $ h_{n+1}\colon \pi_{n+1}(X) \xrightarrow{} H_{n+1}(X) $, the Hurewicz morphism.
We prove the following:

\begin{lemma}\label{lemma:autsplitsummands}
Let $X$ be an $A_n^2$-polyhedron, $n\ge 2$. Let $A\leq H_{n+1}(X)$ be an $h_{n+1}$-split subgroup, thus $H_{n+1}(X)=A\oplus C$ for some abelian group $C$.
Then, for every $f_A\in\Aut(A)$ there exists
$f\in\E(X)$ inducing $(\id, f_A\oplus \id_C, \id) \in \B^{n+2}(X)$.
\end{lemma}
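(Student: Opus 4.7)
The plan is to construct the desired self-homotopy equivalence by first producing a $\Gamma$-isomorphism of the stated form and then invoking the surjectivity of $\Psi\colon \E(X)\to \B^{n+2}(X)$, which follows from Proposition \ref{prop:selfequiviso}. Write the $\Gamma$-sequence of $X$ as
\[H_{n+2}(X)\xrightarrow{b_{n+2}} \Gamma_n^1\big(H_n(X)\big)\xrightarrow{i_n} \pi_{n+1}(X)\xrightarrow{h_{n+1}} H_{n+1}(X)\to 0.\]
By hypothesis the subgroup $A\le H_{n+1}(X)$ is $h_{n+1}$-split, so we have decompositions $\pi_{n+1}(X)\cong A\oplus B$ and $H_{n+1}(X)=A\oplus C$ under which $h_{n+1}$ reads $\id_A\oplus g$ for some $g\colon B\to C$.

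Next I would write down the candidate $\Gamma$-isomorphism $(f_{n+2},f_{n+1},f_n)=(\id, f_A\oplus \id_C, \id)$ and exhibit the auxiliary morphism $\Omega\colon \pi_{n+1}(X)\to\pi_{n+1}(X)$ required by Definition \ref{def:gammasequences}. The obvious choice is $\Omega=f_A\oplus \id_B$ under the splitting $\pi_{n+1}(X)\cong A\oplus B$.

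The verification of the commutativity of the diagram is then essentially a formality, broken into three squares. The leftmost square commutes because $\Gamma_n^1(f_n)=\id$ and $f_{n+2}=\id$, so both compositions equal $b_{n+2}$. The rightmost square commutes since
\[h_{n+1}\Omega(a,b)=h_{n+1}(f_A(a),b)=(f_A(a),g(b))=(f_A\oplus \id_C)(a,g(b))=f_{n+1}h_{n+1}(a,b).\]
For the middle square, the key observation is that, by exactness, $\Im(i_n)=\ker(h_{n+1})=0\oplus\ker(g)\subseteq 0\oplus B$, and on this subgroup $\Omega=f_A\oplus \id_B$ acts as the identity. Hence $\Omega i_n=i_n=\Gamma_n^1(f_n)i_n$, as required. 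Since $f_A\in\Aut(A)$, the triple $(\id, f_A\oplus \id_C, \id)$ is indeed a $\Gamma$-isomorphism, i.e.\ an element of $\B^{n+2}(X)$.

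Finally, applying Proposition \ref{prop:selfequiviso} (whose proof uses the fullness part of Theorem \ref{th:detectingfunctor}) we obtain $f\in\E(X)$ realising this $\Gamma$-isomorphism. The main conceptual point — really the only thing that is not mechanical — is recognising that the splitting hypothesis forces $\Im(i_n)$ to sit inside the $B$-summand of $\pi_{n+1}(X)$, which is precisely what makes the choice $\Omega=f_A\oplus \id_B$ compatible with $i_n$ even though $f_A$ is an arbitrary automorphism of $A$. Everything else is diagram-chasing in a trivial direction.
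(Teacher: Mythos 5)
Your proposal is correct and follows essentially the same route as the paper: the same splitting $\pi_{n+1}(X)\cong A\oplus B$, the same choice $\Omega=f_A\oplus\id_B$, and the same appeal to Theorem \ref{th:detectingfunctor} to realise the resulting $\Gamma$-isomorphism by a self-equivalence. The only difference is that you spell out the commutativity check (in particular that $\Im(i_n)=\ker(h_{n+1})$ lies in the $B$-summand) which the paper leaves implicit in its displayed diagram.
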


\begin{proof}
By hypothesis $H_{n+1}(X)=A\oplus C$, $\pi_{n+1}(X)\cong A\oplus B$,  for some abelian group $B$, and  $h_{n+1} $ can be written as  $\id_A\oplus g$ for some morphism $g\colon B\to C$. Thus, for every $f_A\in \Aut(A)$ we have a commutative diagram
\begin{center}
\begin{tikzpicture}
\tikzset{node distance=0.15\textwidth, auto}
\node(1) {$H_{n+2}(X)$};
\node[right of=1, xshift=0.9cm] (2) {$\Gamma_n^1\big(H_n(X)\big)$};
\node[right of=2, xshift=0.5cm] (3) {$A \oplus B$};
\node[right of=3, xshift=0.8cm] (4) {$A \oplus C$};
\node[right of=4] (5) {$0$};
\node[below of=1, yshift=0.06cm] (6) {$H_{n+2}(X)$};
\node[below of=2, yshift=0.06cm] (7) {$\Gamma_n^1\big(H_n(X)\big)$};
\node[below of=3, yshift=0.06cm] (8) {$A\oplus B$};
\node[below of=4, yshift=0.06cm] (9) {$A \oplus C$};
\node[below of=5, yshift=0.06cm] (10) {$0.$};
\draw[->,swap](1) to node {\small $b_{n+2}$} (2);
\draw[->](2) to node {} (3);
\draw[->,swap](3) to node {\small $h_{n+1}$} (4);
\draw[->](4) to node {} (5);
\draw[->](6) to node{\small $b_{n+2}$} (7);
\draw[->](7) to node {} (8);
\draw[->](8) to node {\small $h_{n+1}$} (9);
\draw[->](9) to node {} (10);
\draw[->](1) to node {\small $\id$} (6);
\draw[->] (2) to node {\small $\id$} (7);
\draw[->] (3) to node {\small $f_A\oplus \id_B$} (8);
\draw[->] (4) to node {\small $f_A\oplus \id_C$} (9);
\end{tikzpicture}
\end{center}
Hence $(\id, f_A\oplus \id_C,\id)\in\B^{n+2}(X)$, and by Theorem \ref{th:detectingfunctor} there exists
$f\in\E(X)$ such that $H_{n+1}(f) = f_A\oplus\id_C$, $H_{n+2}(f)=\id$ and $H_n(f)=\id$.
\end{proof}

The following lemma is crucial in the proof of Theorems \ref{th:realisabilitygr3} and \ref{th:realisabilityn2}:
\begin{lemma}\label{lemma:splitsummands}
Let $X$ be an $A_n^2$-polyhedron, $n \geq 2 $. Suppose that
there exist $h_{n+1}$-split subgroups of $H_{n+1}(X)$. Then:
\begin{enumerate}
\item If $n\ge 3$, $\B^{n+2}(X)$ is either trivial or it has elements of even order.
\item If $\B^{4}(X)$ is finite and non trivial, then it has elements of even order.
\end{enumerate}
\end{lemma}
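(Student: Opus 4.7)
The backbone of the proof is Lemma \ref{lemma:autsplitsummands}, which embeds $\Aut(A)$ into $\B^{n+2}(X)$ for every $h_{n+1}$-split subgroup $A\leq H_{n+1}(X)$. I would open with the elementary observation that for a finitely generated abelian group $A$, $\Aut(A)$ contains an involution unless $A$ is trivial or $A\cong \Z_2$: a cyclic summand $\Z$ or $\Z/m$ with $m\geq 3$ admits $-\id$ as a non-trivial involution, and an elementary abelian $2$-group $A\cong \Z_2^k$ with $k\geq 2$ admits a basis transposition as an involution in $\GL_k(\Z_2)$.

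Fix a non-trivial $h_{n+1}$-split subgroup $A$, which exists by hypothesis. If $A\not\cong \Z_2$, the observation produces an involution $\varphi\in\Aut(A)$, and Lemma \ref{lemma:autsplitsummands} realises $\varphi$ as an order-$2$ element of $\B^{n+2}(X)$, settling both (1) and (2) at once. The content of the lemma therefore lies in the residual case in which every non-trivial $h_{n+1}$-split subgroup is isomorphic to $\Z_2$; fix such a splitting $H_{n+1}(X)=\Z_2\oplus C$ and $\pi_{n+1}(X)=\Z_2\oplus B$ with $h_{n+1}=\id\oplus g$.

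For (1), with $n\geq 3$, I would apply Lemma \ref{lemma:homologynot2group} to reduce to the situation in which every homology group of $X$ is an elementary abelian $2$-group; in particular $H_{n+2}(X)=0$ and $\Gamma_n^1(H_n(X))=H_n(X)$ canonically. An element of $\B^{n+2}(X)$ then amounts to a pair $(f_{n+1},f_n)$ of $\Z_2$-linear automorphisms compatible via some $\Omega\in\Aut(\pi_{n+1}(X))$ with the short exact sequence $0\to H_n(X)\to \pi_{n+1}(X)\to H_{n+1}(X)\to 0$. Exploiting the assumption that no $h_{n+1}$-split subgroup has rank $\geq 2$ severely restricts these compatibility conditions, and a case analysis should show that the only admissible triple is the identity, whence $\B^{n+2}(X)$ is trivial.

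For (2), with $n=2$ and $\B^4(X)$ finite and non-trivial, Lemma \ref{lemma:Hnnot2group} forces $H_2(X)$ to be an elementary abelian $2$-group, so $\Gamma(H_2(X))$ is finite by Proposition \ref{prop:propertiesgammafunctor}, and Proposition \ref{prop:mainfinite} then yields $\rank H_4(X)\leq 1$. Arguing by contradiction, assume every non-trivial element of $\B^4(X)$ has odd order and pick one, say $(f_4,f_3,f_2)$; since $f_4\in\Aut(\Z)=\{\pm\id\}$ has odd order we have $f_4=\id$, so $\Gamma(f_2)$ fixes $\chi=b_4(1)\in \Gamma(H_2(X))$. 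If $f_2$ is itself non-trivial of odd order, Lemma \ref{lemma:h3splitIfOddThenEven} upgrades $f_2$ to an order-$2$ automorphism of $H_2(X)$ fixing $\chi$, which lifts back to an order-$2$ element of $\B^4(X)$ and contradicts the odd-order assumption. The main obstacle is precisely this degenerate subcase $A\cong \Z_2$: $\Aut(A)$ becomes trivial and Lemma \ref{lemma:autsplitsummands} is vacuous, so one must coordinate Lemma \ref{lemma:homologynot2group}, Proposition \ref{prop:mainfinite}, and Lemma \ref{lemma:h3splitIfOddThenEven} to extract an involution from the global structure of the $\Gamma$-sequence.
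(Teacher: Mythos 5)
Your reduction to the residual case in which every $h_{n+1}$-split subgroup is isomorphic to $\Z_2$ matches the paper's opening moves, but that residual case is where the entire content of the lemma lies, and your treatment of it has a genuine gap. For part (1), after invoking Lemma \ref{lemma:homologynot2group} you assert that ``a case analysis should show that the only admissible triple is the identity''; this is not carried out, and the assertion is in fact false whenever $\Z_2$ is a \emph{proper} $h_{n+1}$-split summand, say $H_{n+1}(X)=\Z_2\oplus C$ with $C\ne 0$. In that situation the paper produces a non-trivial involution explicitly: one first shows that $H_{n+1}(X)$ is a finite $2$-group (since $\Gamma_n^1\big(H_n(X)\big)$, hence $\coker b_{n+2}$, is a finite $2$-group, any summand of $H_{n+1}(X)$ that is not a $2$-group would be $h_{n+1}$-split and larger than $\Z_2$), then picks an epimorphism $\tau\colon C\to\Z_2$ and checks that $f(t,c)=\big(t+\tau(c),c\big)$ together with $\Omega(t,b)=\big(t+\tau(g(b)),b\big)$ defines $(\id,f,\id)\in\B^{n+2}(X)$ of order $2$ --- the point being that $\Omega$ fixes $\ker h_{n+1}$ pointwise and is therefore compatible with the identity on $\Gamma_n^1\big(H_n(X)\big)$. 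This transvection-type construction is the key idea of the proof and is entirely absent from your proposal; without it the case of a proper split summand is not handled in either part. In the paper's analysis, triviality of $\B^{n+2}(X)$ occurs only in the extreme sub-case $H_n(X)=H_{n+1}(X)=\Z_2$, $H_{n+2}(X)=0$ with the sequence split.

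Your sketch of part (2) has further holes beyond the missing proper-summand case. Lemma \ref{lemma:h3splitIfOddThenEven} applies only when $\chi=b_4(1)$ has order $4$; when $\Im b_4$ has order at most $2$ one must instead observe that $(-\id,\id,\id)$ is itself a $\Gamma$-isomorphism of even order, which you do not do. You also need to rule out non-trivial odd-order elements with $f_2=\id$ (in the sub-case where $H_3(X)$ itself is $h_3$-split this is automatic because then $H_3(X)=\Z_2$ has trivial automorphism group, but you never record this), and the claim that an order-$2$ automorphism $g$ of $H_2(X)$ fixing $\chi$ ``lifts back'' to $\B^4(X)$ requires justification: it is precisely the $h_3$-splitting $\pi_3(X)\cong\coker b_4\oplus H_3(X)$ that allows one to choose a compatible $\Omega$. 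Your closing sentence concedes that the degenerate sub-case still needs to be ``coordinated''; that coordination is the proof, so as written the proposal establishes only the easy half of the lemma.
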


\begin{proof}
First of all,  observe that we just need to consider when $H_n(X)$ is an elementary abelian $2$-group. In other case, the result is a consequence of Lemma \ref{lemma:Hnnot2group}.
	
Let $A$ be an arbitrary $h_{n+1}$-split subgroup of $H_{n+1}(X)$.  If $A\ne \Z_2$,  there is an involution $\iota\in\Aut(A)$ that induces,  by Lemma \ref{lemma:autsplitsummands}, an element $(\id, \iota\oplus \id, \id) \in \B^{n+2}(X)$ of order $2$, and the result follows. Hence we can assume that every $h_{n+1}$-split subgroup of $H_{n+1}(X)$ is $\Z_2$.

Both assumptions, $H_n(X)$ being an elementary abelian $2$-group and every $h_{n+1}$-split subgroup of $H_{n+1}(X)$ being $\Z_2$, imply that $H_{n+1}(X)$ is a finite $2$-group. Indeed, since $H_n(X)$ is finitely generated, $\Gamma_n^1\big(H_n(X)\big)$ is a finite $2$-group and so is $\coker b_{n+2}$. Then, since $H_{n+1}(X)$ is also finitely generated, any direct summand of $H_{n+1}(X)$ which is not a $2$-group would be $h_{n+1}$-split, contradicting our assumption that every $h_{n+1}$-split subgroup of $H_{n+1}(X)$ is $\Z_2$.
	
	To prove our lemma, we start with the case $A=H_{n+1}(X)$ is $h_{n+1}$-split.

	When $H_{n+2}(X)=0$, the $\Gamma$-sequence of $X$ becomes then the short exact sequence
	\[0\to \Gamma_n^1\big(H_n(X)\big)\to \Gamma_n^1\big(H_n(X)\big)\oplus \Z_2\to \Z_2\to 0.\]
	Notice that any automorphism of order $2$ in $H_n(X)$ yields an automorphism of order $2$ in $\Gamma_n^1\big(H_n(X)\big)$ since $\Gamma_n^1$ is injective on morphisms: it is immediate for $n\ge 3$, and for $n=2$ we apply Lemma \ref{lemma:gammainjective}. As our sequence is split, any $f\in \Aut\big(H_n(X)\big)$ induces the $\Gamma$-isomorphism $(\id,\id, f)$ of the same order. Hence, for $H_n(X)\ne \Z_2$ it suffices to consider an involution. For $H_n(X)=\Z_2$, since by hypothesis $H_{n+1}(X)=\Z_2$ and $H_{n+2}(X)=0$, the only $\Gamma$-isomorphism is $(\id,\id,\id)$ and therefore $\B^{n+2}(X)$ is trivial as claimed.

When $H_{n+2}(X)\ne 0$, for $n\ge 3$ the result follows directly from Lemma \ref{lemma:homologynot2group}. For $n=2$ we also assume that $\B^4(X)$ is finite and non-trivial. Hence, since $H_2(X)$ is an elementary abelian $2$-group, Proposition \ref{prop:mainfinite} implies that $H_4(X)=\Z$. Then, if a $\Gamma$-isomorphism of the form $(-\id,f,\id)$ exists, it is of even order. In particular, if $\Im b_4$ is a subgroup of $\Gamma\big(H_2(X)\big)$ of order $2$, $(-\id,\id,\id)$ is a $\Gamma$-isomorphism of even order.

Assume otherwise that $\Im b_4$ is a group of order $4$. If a $\Gamma$-isomorphism $(\id,f,\id)$ of odd order exists, then $\Gamma(f)\circ b_4 = b_4$. In this situation, by Lemma \ref{lemma:h3splitIfOddThenEven} for $\chi = b_4(1)$, there exists $g\in\Aut\big(H_2(X)\big)$ an automorphism of order $2$ such that $\Gamma(g)  b_4 (1)= b_4 (1)$. Moreover, as we are in the case $A = H_3(X)$ being $h_3$-split, $(\id,g,\id)\in\mathcal{B}^4(X)$ is a $\Gamma$-isomorphism of order $2$.

We deal now with the case $A\lvertneqq H_{n+1}(X)$. Since $A=\Z_2$ is a proper $h_{n+1}$-split subgroup of $H_{n+1}(X)$, there exist non-trivial groups $B$ and $C$ such that
 $$\begin{array}{rcl}
\pi_{n+1}(X)=\Z_2\oplus B & \xrightarrow{h_{n+1}} & \Z_2\oplus C=H_{n+1}(X)\\
(t,b) & \longmapsto & (t,g(b))
\end{array}$$
for some group morphism $B\xrightarrow{g} C$. Moreover,
$H_{n+1}(X)$ is a finite $2$-group, thus $C$ is a (non-trivial) finite $2$-group and there exists an epimorphism $C\xrightarrow{\tau} \Z_2$.

Define $f\in\Aut(\Z_2\oplus C)=\Aut\big(H_{n+1}(X)\big)$, and $\Omega\in\Aut\big(\Z_2\oplus B\big)=\Aut\big(\pi_{n+1}(X)\big)$ to be the non-trivial involutions given by $f(t,c)=\big(t+\tau(c),c\big)$ and $\Omega(t,b)=\big(t+\tau(g(b)), b\big)$. By construction, $h_{n+1}\Omega=f h_{n+1}$, and if $(t,b)\in\coker b_{n+2}=\ker h_{n+1}$ (thus $g(b)=0$), then $\Omega(t,b)=(t,b)$. In other words, $(\id, f,\id)\in\B^{n+2}(X)$ and it has order $2$.
\end{proof}

We now prove our main results.

\begin{proof}[Proof of Theorem \ref{th:realisabilitygr3}] Assume that $H_n(X)$ and $H_{n+1}(X)$ are elementary abelian $2$-groups, and $H_{n+2}(X)=0$. Otherwise, there would already be elements of order $2$ in $\B^{n+2}(X)$ as a consequence of Lemma \ref{lemma:homologynot2group}.

	Write $H_n(X)=\oplus_I\Z_2$, $I$ an ordered set. Since $n\ge 3$, $\Gamma_n^1 = -\otimes \Z_2 $, so $\Gamma_n^1\big(H_n(X)\big)=H_n(X)$. We can also assume that there are no subgroups in $H_{n+1}(X)$ that are $h_{n+1}$-split. In other case, we would deduce from Lemma \ref{lemma:splitsummands} that there are elements of order $2$ in $\B^{n+2}(X)$. Thus $H_{n+1}(X)=\oplus_J \Z_2$ with $J\subset I$, and the $\Gamma$-sequence corresponding to $X$ is
	\[
	\textstyle{0\to \bigoplus_I \Z_2 \xrightarrow{b} \left(\bigoplus_{I-J} \Z_2 \right)\oplus \left(\bigoplus_J \Z_4\right)	\xrightarrow{h} \bigoplus_J \Z_2\to 0}.
	\]
	We may rewrite the sequence as
		\[
	\textstyle{0\to \left(\bigoplus_{I-J} \Z_2\right) \oplus \left(\bigoplus_J \Z_2\right)\xrightarrow{b} \left(\bigoplus_{I-J} \Z_2 \right)\oplus \left(\bigoplus_J \Z_4\right)	\xrightarrow{h} \bigoplus_J \Z_2\to 0}
	\]
	and assume that $b(x, y)=(x,2y)$ and $h(x, y)=y\hspace{-4pt}\mod 2$. It is clear that any $f\in\Aut\big(\bigoplus_{I-J} \Z_2\big)$ induces a $\Gamma$-isomorphism $(0,\id, f\oplus\id)$ of the same order.
	
	On the one hand, for $|I-J|\ge 2$, $\bigoplus_{I-J} \Z_2$ has an involution and therefore $\B^{n+2}(X)$ has elements of even order.  On the other hand, for $|I-J|<2$, we consider the remaining possibilities.
	
	Suppose that $|I-J|=1$. Then, $\pi_{n+1}(X)=\Z_2\oplus(\oplus_J\Z_4)$. If $J$ is trivial, $\B^{n+2}(X)$ is clearly trivial as well. Otherwise, suppose that $I-J=\{i\}$ and choose $j\in J$. Define $f\in \Aut \big(\Z_2\oplus\Z_2\oplus (\oplus_{I-\{i, j\}}\Z_2)\big)$ by $f(x, y, z)=(x, x+y, z)$ and $g\in \Aut \big(\Z_2\oplus\Z_4\oplus (\oplus_{I-\{i, j\}}\Z_4)\big)$ by $g(x, y, z)=(x,2x+y, z)$. Then $(\id,\id, f)$ is a $\Gamma$-isomorphism of order $2$ since we have a commutative diagram
	\begin{center}
\begin{tikzpicture}
\tikzset{node distance=0.148\textwidth, auto}
\node(1) {0};
\node[right of=1, xshift=0.6cm] (2) {\small $\Z_2\oplus \Z_2\oplus(\oplus_{I-\{i, j\}} \Z_2)$};
\node[right of=2, xshift=2.3cm] (3) {\small $\Z_2\oplus \Z_4\oplus(\oplus_{I-\{i, j\}} \Z_4)$};
\node[right of=3, xshift=1.8cm] (4) {\small $\Z_2 \oplus(\oplus_{J-\{j\}} \Z_2)$};
\node[right of=4] (5) {$0$};
\node[below of=1, yshift=0.05cm] (6) {\small $0$};
\node[below of=2, yshift=0.05cm] (7) {\small $\Z_2\oplus \Z_2\oplus(\oplus_{I-\{i, j\}} \Z_2)$};
\node[below of=3, yshift=0.05cm] (8) {\small $\Z_2\oplus \Z_4\oplus(\oplus_{I-\{i, j\}} \Z_4)$};
\node[below of=4, yshift=0.05cm] (9) {\small $\Z_2 \oplus(\oplus_{J-\{j\}} \Z_2)$};
\node[below of=5, yshift=0.05cm] (10) {\small $0.$};
\draw[->](1) to node {} (2);
\draw[->](2) to node {} (3);
\draw[->](3) to node {} (4);
\draw[->](4) to node {} (5);
\draw[->](6) to node{} (7);
\draw[->](7) to node {} (8);
\draw[->](8) to node {} (9);
\draw[->](9) to node {} (10);
\draw[->] (2) to node {\small $f$} (7);
\draw[->] (3) to node {\small $g$} (8);
\draw[->] (4) to node {\small $\id$} (9);
\end{tikzpicture}
\end{center}

	Suppose that  $I=J$. If $H_n(X)=H_{n+1}(X)=\Z_2$, $\B^{n+2}(X)$ is trivial. If not, choose $i, j\in I$ and define maps $f\in\Aut\big(\Z_2\oplus\Z_2\oplus(\oplus_{I-\{i, j\}}\Z_2)\big)$  by $f(x, y, z)=(y, x, z)$, and $g\in\Aut\big(\Z_4\oplus\Z_4\oplus(\oplus_{I-\{i, j\}}\Z_4)\big)$  by $g(x, y, z)=(y, x, z)$. We have the following commutative diagram
	\begin{center}
\begin{tikzpicture}
\tikzset{node distance=0.148\textwidth, auto}
\node(1) {\small $0$};
\node[right of=1, xshift=0.3cm] (2) {\small $\Z_2\oplus \Z_2\oplus(\oplus_{I-\{i, j\}} \Z_2)$};
\node[right of=2, xshift=2cm] (3) {\small $\Z_4\oplus \Z_4\oplus(\oplus_{I-\{i, j\}} \Z_4)$};
\node[right of=3, xshift=2cm] (4) {\small $\Z_2\oplus\Z_2 \oplus(\oplus_{I-\{i, j\}} \Z_2 )$};
\node[right of=4, xshift=0.4cm] (5) {\small $0$};
\node[below of=1, yshift=0.05cm] (6) {\small $0$};
\node[below of=2, yshift=0.05cm] (7) {\small $\Z_2\oplus \Z_2\oplus(\oplus_{I-\{i, j\}} \Z_2)$};
\node[below of=3, yshift=0.05cm] (8) {\small $\Z_4\oplus \Z_4\oplus(\oplus_{I-\{i, j\}} \Z_4)$};
\node[below of=4, yshift=0.05cm] (9) {\small $\Z_2\oplus\Z_2 \oplus(\oplus_{I-\{i, j\}} \Z_2) $};
\node[below of=5, yshift=0.05cm] (10) {\small $0.$};
\draw[->](1) to node {} (2);
\draw[->](2) to node {} (3);
\draw[->](3) to node {} (4);
\draw[->](4) to node {} (5);
\draw[->](6) to node{} (7);
\draw[->](7) to node {} (8);
\draw[->](8) to node {} (9);
\draw[->](9) to node {} (10);
\draw[->] (2) to node {\small $f$} (7);
\draw[->] (3) to node {\small $g$} (8);
\draw[->] (4) to node {\small $f$} (9);
\end{tikzpicture}
\end{center}
 Then, $(0, f, f)$ is a $\Gamma$-isomorphism of order $2$.
\end{proof}

As a consequence, we obtain a negative answer to the problem of realising groups as self-homotopy equivalences of $A_n^2$-polyhedra:
\begin{corollary}\label{cor:KhanReal}
	Let $G$ be a non nilpotent finite group of odd order. Then, for any $n\ge 3$ and for any $A_n^2$-polyhedron $X$, $G \not\cong \E(X)$.
\end{corollary}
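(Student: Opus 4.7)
The plan is to combine Theorem \ref{th:realisabilitygr3} with the classical fact that $\E_*(X)$ is nilpotent whenever $X$ is a simply connected finite CW complex (Dror--Zabrodsky). I would argue by contradiction: suppose $G\cong \E(X)$ for some finite type $A_n^2$-polyhedron $X$ with $n\ge 3$, where $G$ is a non-nilpotent finite group of odd order.

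First, by Proposition \ref{prop:selfequiviso}, $\B^{n+2}(X)\cong \E(X)/\E_*(X)$ is a quotient of $G$, hence also a finite group of odd order. Theorem \ref{th:realisabilitygr3} then forces $\B^{n+2}(X)$ to be trivial, since any non-trivial such group must contain an element of even order. Consequently $\E(X)=\E_*(X)$.

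Next, since $X$ is an $(n+2)$-dimensional CW complex of finite type, it has only finitely many cells in total, so $X$ is a simply connected finite CW complex. The Dror--Zabrodsky nilpotency theorem ensures that $\E_*(X)$ is nilpotent for such spaces. Therefore $G\cong \E(X)=\E_*(X)$ is nilpotent, contradicting the hypothesis.

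The only non-routine ingredient is invoking the nilpotency of $\E_*(X)$ for finite simply connected complexes; the remainder is a mechanical application of Theorem \ref{th:realisabilitygr3} and Proposition \ref{prop:selfequiviso}. In particular, the odd-order hypothesis is used precisely to trivialise $\B^{n+2}(X)$, and the non-nilpotency hypothesis is used precisely to obtain the contradiction from the identification $\E(X)=\E_*(X)$.
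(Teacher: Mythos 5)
Your argument is correct and is essentially identical to the paper's proof: both reduce to $\E(X)=\E_*(X)$ via Theorem \ref{th:realisabilitygr3} applied to the odd-order quotient $\E(X)/\E_*(X)$, and then invoke the Dror--Zabrodsky nilpotency theorem for $\E_*(X)$ of a simply connected finite(-dimensional) complex to contradict non-nilpotency of $G$.
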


\begin{proof}
	Assume that there exists an $A_n^2$-polyhedron $X$ such that $\E(X)\cong G$. Then, if $\E(X)\ne \E_*(X)$, the quotient $\E(X)/\E_*(X)$ is a finite group of odd order, which contradicts Theorem \ref{th:realisabilitygr3}. Thus $G \cong \E(X) = \E_*(X)$. However, since $X$ is a $1$-connected and finite-dimensional $CW$-complex, $\E_*(X)$ is a nilpotent group, \cite[Theorem D]{DroZab79}, which contradicts the fact that $G$ is non nilpotent.
\end{proof}

We end this paper by proving our second main result:
\begin{proof}[Proof of Theorem \ref{th:realisabilityn2}]
	By hypothesis $\B^4(X)$ is a finite group of odd order. From Lemma \ref{lemma:Hnnot2group} we deduce that $H_2(X)$ is an elementary abelian $2$-group and from Proposition \ref{prop:propertiesgammafunctor} that $\Gamma\big(H_2(X)\big)$ is a $2$-group. In particular, every element of $\Gamma\big(H_2(X)\big)$ is of finite order, and therefore, by Proposition \ref{prop:mainfinite}, $\rank H_4(X) \leq 1$ so we have Theorem \ref{th:realisabilityn2}.(1). Now, any element in $\B^4(X)$  is of the form $(0, f_2, f_3)$  if $H_4(X) = 0$ or $(\id, f_2, f_3)$ if $H_4 (X) = \Z$. Notice that a $\Gamma$-morphism of the form $(-\id, f_2, f_3)$ has even order thus it cannot be a $\Gamma$-isomorphism under our hypothesis. Therefore, if $H_4 (X) = \Z$, then $b_4(1)$ generates a $\Z_4$ factor in $\Gamma\big(H_2(X)\big)$, and under our hypothesis the equation $$\rank \Gamma\big(H_2(X)\big)=\rank H_4(X) + \rank(\coker b_4)$$ holds for $\rank H_4(X) \leq 1$.
	
	Observe that any $\Gamma$-isomorphism of $X$ induces a chain morphism of the short exact sequence
	\[
	0\to \coker b_4 \to \pi_3(X) \xrightarrow{h_3} H_3(X)\to 0.
	\]
	We will draw our conclusions from this induced morphism, which can be seen as an automorphism of $\pi_3(X)$ that maps the subgroup $i_2(\coker b_4)$ to itself, thus inducing an isomorphism on the quotient, $H_3(X)$.
		
	As we mentioned above, $\Gamma\big(H_2(X)\big)$ is a $2$-group. Then $\coker b_4$ is a quotient of a $2$-group so a $2$-group itself. We claim that $H_3(X)$ is also a $2$-group: otherwise, $H_3(X)$ has a summand whose order is either infinite or odd and therefore this summand would be $h_3$-split, which from Lemma \ref{lemma:splitsummands} implies that $\B^4(X)$ has elements of even order, leading to a contradiction. Since $\coker b_4$ and $H_3(X)$ are $2$-groups, so is $\pi_3(X)$, proving thus Theorem \ref{th:realisabilityn2}.(2).
	
	Moreover, no subgroup of $H_3(X)$ can be $h_3$-split as a consequence of Lemma \ref{lemma:splitsummands}, and thus, $\rank H_3(X) \le \rank (\coker b_4) = \rank \Gamma\big(H_2(X)\big) - \rank H_4(X)$. We can compute $\rank \Gamma\big(H_2(X)\big)$ using Proposition \ref{prop:propertiesgammafunctor} and immediately obtain Theorem \ref{th:realisabilityn2}.(3).
		
	Now for a $2$-group $G$, define the subgroup $\Omega_1(G)=\{g\in G : \operatorname{ord}(g)|2\}$.  One can easily check that $\Omega_1\big(\pi_3(X)\big)\le i_2(\coker b_4)$ and, from \cite[Ch. 5, Theorem 2.4]{Gor68}, we obtain that any automorphism of odd order of $\pi_3(X)$ acting as the identity on $i_2(\coker b_4)$ must be the identity.

	Then, if $(\id, f_3, f_2)\in \B^4(X)$ is a $\Gamma$-morphism with $f_3$ non-trivial, $f_3$ has odd order, so we may assume that $\Omega\colon \pi_3(X)\to\pi_3(X)$ (see Definition \ref{def:gammasequences}) has odd order too. By the argument above, it must induce a non-trivial homomorphism on $i_2(\coker b_4)$ and therefore $f_2$ is non-trivial as well. Thus, the natural action of $\B^4(X)$ on $H_2(X)$ must be faithful, since any $\Gamma$-automorphism $(\id, f_3, f_2)\in \B^4(X)$ induces a non-trivial $f_2\in \Aut\big(H_2(X)\big)$. Then, Theorem \ref{th:realisabilityn2}.(4) follows.
\end{proof}

\providecommand{\bysame}{\leavevmode\hbox to3em{\hrulefill}\thinspace}
\providecommand{\MR}{\relax\ifhmode\unskip\space\fi MR }
\providecommand{\MRhref}[2]{%
  \href{http://www.ams.org/mathscinet-getitem?mr=#1}{#2}
}
\providecommand{\href}[2]{#2}

\end{document}